\newtheorem{theorem}{Theorem}[section]
\newtheorem*{thA}{Theorem A}
\newtheorem*{thB}{Theorem B}
\newtheorem*{corA}{Corollary A}
\newtheorem*{corB}{Corollary B}
\newtheorem{lemma}[theorem]{Lemma}
\newtheorem{proposition}[theorem]{Proposition}
\newtheorem{fact}[theorem]{Fact}
\newtheorem{corollary}[theorem]{Corollary}
\DeclareMathOperator{\C}{\mathcal{C}}
\DeclareMathOperator{\NC}{\mathcal{N}_{\mathcal{C}}}
\DeclareMathOperator{\proC}{\mathop{pro-}\mathcal{C}}
\DeclareMathOperator{\normleq}{\unlhd}
\newcommand{\Z}{\mathbb Z}
\newcommand{\supp}{\mathrm{supp}}
\newcommand{\link}{\mathrm{link}}
\newcommand{\id}{\mathrm{id}}
\title{Residual properties of graph products of groups}
\author[Federico Berlai]{Federico Berlai}
\address[Federico Berlai]{Universit\"{a}t Wien, Fakult\"{a}t f\"{u}r Mathematik, Oskar-Morgenstern-Platz 1, 1090 Wien, Austria}
\email[Federico Berlai]{federico.berlai@univie.ac.at}
\author[Michal Ferov]{Michal Ferov}
\address[Michal Ferov]{Building 54, Mathematical Sciences, University of Southampton, Highfield, Southampton SO17 1BJ, UK}
\email[Michal Ferov]{michal.ferov@soton.ac.uk}
\keywords{graph products, residual properties, pro-$\C$ topologies, local embeddability, residually amenable groups.}
\subjclass[2010]{20F65, 20E26, 20E06}
\begin{document}

\begin{abstract}
We prove that the class of residually $\C$ groups is closed under taking graph products, provided that $\C$ is closed under taking subgroups, finite direct products and that free-by-$\C$ groups are residually $\C$. As a consequence, we show that local embeddability into various classes of groups is stable under graph products. In particular, we prove that graph products of residually amenable groups are residually amenable, and that locally embeddable into amenable groups are closed under taking graph products.
\end{abstract}

\maketitle

\section{Introduction and motivation}
\noindent
Graph products were introduced by Green \cite{Green}, and are a common generalisation of direct and free products. When all the groups involved are infinite cyclic, the graph products are known as right-angled Artin groups (RAAGs). 
In this sense, graph products generalise direct and free products in the same way as RAAGs generalise free abelian and free groups.

Let $\Gamma=(V,E)$ be a simplicial graph, i.e. $V$ a set and $E \subseteq \binom{V}{2}$ a graph with no loops and no multiple edges, and let $\mathcal{G}=\{G_v \mid v\in V\}$ 
be a family of groups indexed by the vertex set $V$. Note that the set $V$ can be of arbitrary cardinality.
The \emph{graph product} $\Gamma\mathcal{G}$ of the groups $\mathcal{G}$ with respect to the graph $\Gamma$ is defined as the 
quotient of the free product $\ast_{v \in V}G_v$ obtained by adding all the relations of the form
$$g_u g_v = g_v g_u \quad \forall g_u\in G_u,g_v\in G_v, \{u,v\}\in E.$$
The groups $G_v\in\mathcal{G}$ are called the \emph{vertex groups} of $\Gamma \mathcal{G}$.

Properties that are stable under direct and free products are often inherited by graph products too.
Green originally proved that a graph product of residually finite (resp.: $p$-finite) groups 
is again residually finite (resp.: $p$-finite) \cite{Green}.
More recently, the second named author proved that graph products of residually finite solvable groups are again residually finite solvable \cite[Lemma 6.8]{mf}.
Other examples are soficity \cite{CHR}, (hereditary) conjugacy separability \cite{mf}, Tits alternatives \cite{AnMy}, the Haagerup property or finiteness of asymptotic dimension \cite{AnDr}.

In this work we study residual properties of graph products. We adopt an approach that unifies and recovers the known facts concerning 
residual properties. Moreover, it allows us to prove new results in this direction.

If $\C$ is a class of groups, then we say that a group $G$ is \emph{residually} $\C$ if for every non-trivial element $g \in G$ there is a group $C \in \C$ and a surjective homomorphism $\varphi \colon G \twoheadrightarrow C$ such that $\varphi(g)$ is non-trivial in $C$. 

\begin{thA}
Let $\C$ be a class of groups closed under taking subgroups and finite direct products. Assume that free-by-$\C$ groups are residually~$\C$, then the class of residually~$\C$ groups is closed under taking graph products.
\end{thA}
In \cite{fb} the first named author considered the class of residually amenable groups.
Among other things, it is proved there that such class is closed under taking free products \cite[Corollary 1.2]{fb}.
As a consequence of Theorem A, we deduce that the class of residually amenable groups is also closed under graph products.

\begin{corA}\label{residually amenable}
The class of residually amenable groups is closed under taking graph products. The same is true
for residually elementary amenable groups.
\end{corA}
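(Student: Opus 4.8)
The plan is to derive Corollary A directly from Theorem A, taking $\C$ to be the class of all amenable groups (respectively, all elementary amenable groups), so the task reduces to checking the three hypotheses of Theorem A for each of these classes. The first two are the standard permanence properties: both classes are closed under subgroups, quotients, extensions and directed unions; in particular they are closed under taking subgroups and under finite direct products. For elementary amenable groups these closure properties are built into the definition of the class, and for amenable groups they are classical.

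The real content is the third hypothesis, and this is where I would put the work. Let $G$ be a free-by-$\C$ group, so that there is a short exact sequence $1 \to F \to G \to A \to 1$ with $F$ free and $A \in \C$; write $\pi \colon G \twoheadrightarrow A$ for the quotient map. First I would recall the classical fact that free groups are residually nilpotent, so that the terms $\gamma_n(F)$ of the lower central series of $F$ satisfy $\bigcap_{n \geq 1} \gamma_n(F) = \{1\}$. Each $\gamma_n(F)$ is characteristic in $F$, hence normal in $G$, and $G/\gamma_n(F)$ is an extension of the nilpotent group $F/\gamma_n(F)$ by $A$. Since every nilpotent group is amenable, indeed elementary amenable, the quotient $G/\gamma_n(F)$ is an extension of a group in $\C$ by a group in $\C$, and hence belongs to $\C$. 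Now if $g \in G$ is non-trivial, then either $g \notin F$, in which case $\pi(g) \neq 1$ realises $g$ non-trivially in the quotient $A \in \C$, or $g \in F$, in which case some $\gamma_n(F)$ does not contain $g$ and the quotient $G \twoheadrightarrow G/\gamma_n(F)$ keeps $g$ non-trivial. Thus $G$ is residually $\C$, and Theorem A yields the corollary.

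I expect the only delicate point to be that this argument uses closure of $\C$ under extensions, which is not one of the hypotheses of Theorem A but does hold for amenable groups and for elementary amenable groups; alternatively, residual amenability (respectively, residual elementary amenability) of free-by-$\C$ groups can be quoted from \cite{fb}, where residual amenability of free products is established by closely related methods. Everything else is routine verification.
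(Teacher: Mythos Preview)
Your proposal is correct and follows essentially the same route as the paper: verify that (elementary) amenable groups are closed under subgroups and finite direct products, check that free-by-$\C$ groups are residually~$\C$, and invoke Theorem~A. The only cosmetic difference is that the paper simply quotes the free-by-$\C$ statement from \cite{fb} (recorded as Fact~\ref{free-by-A}), whereas you supply a self-contained lower-central-series argument---an argument you yourself note could be replaced by that citation.
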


Moreover, a result in the same spirit is true for a weaker form of approximation, the one of local embeddability into a class $\C$ (LE-$\C$ for short). See the beginning of Section \ref{last.section} for the precise definitions.

\begin{thB}
Let $\mathcal{C}$ be a class of groups, suppose that $\mathcal{C}$ is closed under taking subgroups, finite direct products and that graph products of residually~$\mathcal{C}$ groups are residually~$\mathcal{C}$.
Then the class of LE-$\mathcal{C}$ groups is closed under graph products.

\end{thB}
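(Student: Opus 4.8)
The plan is to reduce the statement to the graph product normal form together with the hypothesis on residually~$\C$ groups, and I would begin by isolating the two soft ingredients. First, since $\C$ is closed under finite direct products, for every residually~$\C$ group $R$ and every finite $S\subseteq R$ there is a single homomorphism $R\to C$ with $C\in\C$ that is injective on $S$ (take the product of finitely many $\C$-quotients of $R$, one separating each pair of elements of $S$). Second, members of $\C$ are residually~$\C$, so by the hypothesis every graph product of groups in $\C$ is residually~$\C$. Granting these, and noting that composing a local embedding $F\to D$ with a homomorphism $D\to C$ injective on the finite image of $F$ is again a local embedding, it suffices to prove: for a graph product $\Gamma\mathcal{G}$ whose vertex groups are LE-$\C$ and for any finite $F\subseteq\Gamma\mathcal{G}$, there is a graph product $H$ of groups in $\C$ and a local embedding $F\to H$.

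To build $H$, I would fix reduced expressions for the elements of $F$ and for the products $ff'$ with $f,f'\in F$. Only finitely many vertices $v_1,\dots,v_k$ appear in the supports of these elements; let $\Gamma_0$ be the induced subgraph on them and $L$ a bound on the syllable lengths of the elements of $F$. For each $i$ let $W_i\subseteq G_{v_i}\setminus\{1\}$ collect the syllables occurring in $G_{v_i}$, and let $\widehat W_i$ be the finite set of all products of at most $2L$ elements of $W_i$. Applying the definition of LE-$\C$ to $G_{v_i}$ and the finite set $\widehat W_i\widehat W_i\cup\widehat W_i\cup\{1\}$ yields $C_i\in\C$ and $\varphi_i\colon G_{v_i}\to C_i$ injective on $\widehat W_i\cup\{1\}$ with $\varphi_i(ab)=\varphi_i(a)\varphi_i(b)$ whenever $a,b,ab\in\widehat W_i$; in particular $\varphi_i(1)=1$. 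Put $H=\Gamma_0\{C_i\}$.

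I would then define $\Phi\colon F\to H$ by $\Phi(f)=\varphi_{v(1)}(h_1)\cdots\varphi_{v(m)}(h_m)$ for a reduced expression $f=h_1\cdots h_m$, $h_j\in W_{v(j)}$. This is well defined since two reduced expressions of the same element of $\Gamma_0\mathcal{G}_0$ differ by successive transpositions of adjacent syllables in distinct adjacent vertex groups, whose $\varphi_i$-images commute in $H$. It is injective on $F$: the word $\varphi_{v(1)}(h_1)\cdots\varphi_{v(m)}(h_m)$ is reduced in $H$ — its syllables are nontrivial because $\varphi_i$ is injective on $W_i\cup\{1\}$, and reducedness depends only on the vertex sequence — so by the normal form theorem for graph products an equality $\Phi(f)=\Phi(f')$ would force the two reduced words to be shuffle-equivalent with equal matched syllables in the $C_i$, hence $f=f'$ by injectivity of the $\varphi_i$ on the $W_i$. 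It is multiplicative on $F$: if $f,f',ff'\in F$, reduce the concatenation of reduced expressions for $f$ and $f'$ to a reduced expression for $ff'$ by shuffles, merges of adjacent same-vertex syllables, and deletions of trivial ones; each syllable produced is a product of at most $2L$ elements of some $W_i$, so it lies in $\widehat W_i$ and the merge/deletion is matched in $H$ via $\varphi_i(a)\varphi_i(b)=\varphi_i(ab)$ (and $\varphi_i(1)=1$). Performing the same operations on $\Phi(f)\Phi(f')$ in $H$ yields $\Phi(ff')$. Finally, since $H$ is a graph product of groups in $\C$, it is residually~$\C$, and composing $\Phi$ with a homomorphism $H\to C$, $C\in\C$, injective on the finite set $\Phi(F)$ gives a local embedding of $F$ into a member of $\C$. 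As $F$ was arbitrary, $\Gamma\mathcal{G}$ is LE-$\C$.

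The hardest part is the bookkeeping underlying multiplicativity: one must fix, before choosing the $\varphi_i$, finite subsets $\widehat W_i$ of the vertex groups large enough that the entire reduction of any product $ff'$ can be simulated inside $H$ one syllable-merge at a time without ever leaving the range on which $\varphi_i$ is known to be injective and multiplicative — the weight bound $2L$ is exactly what makes this possible. The remaining ingredients are routine: the two observations of the first paragraph (using only that $\C$ is closed under finite direct products), and the standard structure theory of graph products \cite{Green} — well-definedness of supports, reduction of words to reduced words by shuffles, merges and deletions, and uniqueness of reduced words up to shuffles.
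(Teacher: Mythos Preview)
Your proof is correct and follows essentially the same strategy as the paper: assemble vertex-wise almost-homomorphisms into a map from $G$ to a graph product $H$ of groups in $\C$, verify via the normal form theorem that this map is a $K$-almost-homomorphism, and then compose with a homomorphism $H\to C\in\C$ coming from the residual~$\C$ hypothesis. The only real difference is in the bookkeeping for multiplicativity: the paper carries out a fine-grained analysis of how the concatenation of two reduced words reduces (first cancelling a maximal common ``middle'' $c,c^{-1}$, then performing single merges that cannot cascade), working with the set $K'=K^{-1}K$, whereas you sidestep that case analysis with the coarser but sufficient weight bound $\widehat W_i$ of products of at most $2L$ syllables --- both devices serve the same purpose of ensuring that every intermediate syllable lies in the domain where the $\varphi_i$ are multiplicative.
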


This general result allows us to establish that the property of being LE-$\C$ is stable under under graph products for certain classes of groups.
\begin{corB}
Let $\C$ be one of the following classes:
    \begin{enumerate}
        \item finite groups,
        \item finite $p$-groups,
        \item solvable groups,
        \item finite solvable groups,
        \item elementary amenable groups,
        \item amenable groups.
    \end{enumerate}
    Then the class of LE-$\C$ groups is closed under graph products.
\end{corB}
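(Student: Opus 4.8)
The plan is to derive Corollary B directly from Theorem B. Fix one of the six classes $\C$. To invoke Theorem B I must verify three things: that $\C$ is closed under subgroups, that it is closed under finite direct products, and that graph products of residually-$\C$ groups are residually-$\C$. The first two are routine for all six classes — for elementary amenable and for amenable groups, closure under finite direct products is an instance of closure under extensions, writing $A\times B$ as an extension of $B$ by $A$. For the third condition I appeal to Theorem A, whose hypotheses (closure of $\C$ under subgroups and finite direct products) are now in hand; hence it suffices to check that \emph{free-by-$\C$ groups are residually $\C$}. So the whole corollary reduces to showing, for each of the six classes, that every short exact sequence $1\to F\to G\to Q\to 1$ with $F$ free of arbitrary rank and $Q\in\C$ has $G$ residually $\C$. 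In all cases a nontrivial $g\in G$ lying outside $F$ has nontrivial image in $Q\in\C$, so only $1\neq g\in F$ needs attention.

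\emph{Classes closed under extensions and containing all solvable groups}, namely solvable, elementary amenable, and amenable groups. Here I use the derived series: each $F^{(n)}$ is verbal, hence characteristic in $F$ and so normal in $G$, and $G/F^{(n)}$ is an extension of the solvable group $F/F^{(n)}\in\C$ by $Q\in\C$, hence lies in $\C$. Since $\bigcap_n F^{(n)}=1$ for every free group — reduce to a finitely generated free factor, onto which $F$ retracts, and quote residual nilpotence of finitely generated free groups — there is an $n$ with $g\notin F^{(n)}$, and $g$ survives in $G/F^{(n)}\in\C$.

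\emph{The finite classes}, namely finite groups, finite $p$-groups, finite solvable groups. Now $Q\in\C$ is finite, so $[G:F]<\infty$. Free groups of any rank are residually finite, residually $p$-finite, and residually (finite solvable), again by retracting onto a finitely generated free factor, so there is a surjection $\psi\colon F\twoheadrightarrow A$ onto a finite group (respectively a finite $p$-group, a finite solvable group) with $\psi(g)\neq 1$. With $N=\ker\psi$, the normal core $K=\bigcap_{x}xNx^{-1}$ over a transversal of $F$ in $G$ is normal in $G$, satisfies $g\notin K$, and $F/K$ embeds into $A^{[G:F]}$; since a finite direct product of finite $p$-groups is again a finite $p$-group (and likewise for finite solvable groups) and $\C$ is subgroup-closed, $F/K\in\C$. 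As $\C$ is extension-closed, $G/K$ — an extension of $F/K$ by $Q$, both in $\C$ — lies in $\C$, and $g$ survives there. This finishes the reduction and hence Corollary B.

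The step I anticipate to be the real obstacle is the free part when $F$ is infinitely generated: then $F$ has \emph{no} proper finite-index characteristic subgroup, so one cannot find a single characteristic subgroup of $F$ realizing the needed quotient, and the naive ``take a finite quotient of $F$ and intersect all $G$-conjugates'' breaks down for the infinite classes because the resulting product of finite groups need not lie in $\C$. The two devices above are tailored to this: the verbal derived series works whenever $\C$ is extension-closed and contains the solvable groups, while for the finite classes the hypothesis $Q\in\C$ forces $[G:F]<\infty$, keeping the conjugacy core finite. Everything else — extension-closure of the elementary amenable and amenable classes, and residual finiteness and residual solvability of free groups of arbitrary rank — is standard, and specializing to finite groups, finite $p$-groups, and finite solvable groups recovers the classical statements of Green \cite{Green} and \cite[Lemma 6.8]{mf}.
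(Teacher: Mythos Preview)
Your argument is correct and follows the same top-level reduction as the paper: apply Theorem~B, so that everything comes down to checking that graph products of residually~$\C$ groups are residually~$\C$, which in turn (via Theorem~A) reduces to the statement that free-by-$\C$ groups are residually~$\C$.

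Where you diverge is in how that last step is discharged. The paper does not prove it directly: for the first four classes it invokes Gruenberg's observation that they are root classes together with Lemma~\ref{lemma.boh} (quoted from \cite{fb}), and for the two amenable classes it quotes Fact~\ref{free-by-A} (again from \cite{fb}). You instead give a self-contained proof, and with a different case split: rather than ``root class vs.\ not'', you separate the extension-closed classes containing all solvable groups (solvable, elementary amenable, amenable), handled via the derived series of $F$, from the classes of finite groups (finite, finite $p$, finite solvable), handled via a finite normal core using $[G:F]<\infty$. Both routes are valid; yours is more elementary and avoids the root-class machinery and the external citations, at the cost of being longer. Your closing remark that an infinitely generated free group has no proper characteristic subgroup of finite index is correct and nicely explains why the two devices (verbal subgroups for the infinite classes, finite core for the finite ones) are genuinely different.
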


\section{Preliminaries}
\subsection{Notations}
Throughout this work, all graphs considered are simplicial graphs, also if not explicitly stated.

The identity element of a group $G$ is denoted by $e_G$, or simply by $e$ if the group $G$ is clear from the context. Given two elements $h,k\in G$, the commutator $hkh^{-1}k^{-1}$ is denoted by $[h,k]$. If $H,K\leq G$ are two subgroups, $[H,K]$ denotes the subgroup of $G$ generated by the elements $[h,k]$, where $h\in H$ and $k\in K$. 
A surjective homomorphism is usually indicated by $\varphi\colon G\twoheadrightarrow H$.

We use the standard notation of an $\mathcal{A}$-by-$\mathcal{B}$ group to denote a group $G$ with a normal subgroup $N\trianglelefteq G$ such that $N\in\mathcal{A}$ and $G/N\in\mathcal{B}$, where $\mathcal{A}$ and $\mathcal{B}$ are given classes of groups (e.g. $\mathcal{A}$ being the free groups and $\mathcal{B}$ being the amenable groups, in the case of a free-by-amenable group). 

For a residually finite solvable group we mean a residually (finite solvable) group, not a group which is solvable and, at the same time, residually finite.


\subsection{Graph products}
We recall here some terminology and facts about graph products that will be used in
this paper.
Let $G = \Gamma\mathcal{G}$ be a graph product. Every element $g \in G$ can be obtained as a product of a sequence $W \equiv (g_1, g_2, \dots , g_n)$, where each $g_i$ belongs to some $G_{v_i} \in \mathcal{G}$. We say that $W$ is a \emph{word} in $G$ and that the elements $g_i$ are its \emph{syllables}. The \emph{length} of a word is the number of
its syllables, and it is denoted by $\lvert W\rvert$.

Transformations of the three following types can be defined on words in graph products:
\begin{enumerate}
	\item[(T1)]	remove the syllable $g_i$ if $g_i =e_{G_v}$, where $v \in V$ and $g_i \in G_v$,
	\item[(T2)]	remove two consecutive syllables $g_i, g_{i+1}$ belonging to the same vertex group $G_v$ and replace them by the single syllable $g_i g_{i+1} \in G_v$,
	\item[(T3)] interchange two consecutive syllables $g_i \in G_u$ and $g_{i+1} \in G_v$ if $\{u, v\} \in E$.
\end{enumerate}
The last transformation is also called \emph{syllable shuffling}. Note that transformations (T1) and (T2) decrease the length of a word, whereas transformations (T3) preserve it. Thus, applying finitely many of these transformations to a word $W$, we obtain a word $W'$ which is of minimal length and that represents the same element in $G$.

For $1 \leq i < j \leq n$, we say that syllables $g_i, g_j$ can be \emph{joined together} if they belong to the same vertex group and \lq everything in between commutes with them\rq. More formally: $g_i, g_j \in G_v$ for some $v \in V$ and for all $i < k < j$ we have that $g_k \in G_{v_k}$ for some $v_k\in\link(v) := \{u \in V \mid \{u,v\}\in E\}$. 
In this case the words 
$$W \equiv (g_1, \dots, g_{i-1}, g_i, g_{i+1}, \dots, g_{j-1}, g_j, g_{j+1}, \dots, g_n)$$ and $$W' \equiv (g_1, \dots, g_{i-1}, g_i g_j, g_{i+1}, \dots, g_{j-1}, g_{j+1}, \dots, g_{n})$$ represent the same group element in $G$, and the length of the word $W'$ is strictly shorter than $W$.

We say that a word $W \equiv (g_1, g_2, \dots , g_n)$ is \emph{reduced} if it is either the empty word, or if $g_i \neq e$ for all $i$ and no two distinct syllables can be joined together. As it turns out, the notion of being reduced and the notion of being of minimal length coincide, as it was proved by Green \cite[Theorem 3.9]{Green}:
\begin{theorem}[Normal Form Theorem]
	\label{nft}
Every element $g$ of a graph product $G$ can be represented by a reduced word. Moreover, if two reduced words $W, W'$ represent the same element in the group $G$, then $W$ can be obtained from $W'$ by a finite sequence of syllable shufflings. In particular, the length of a reduced word is minimal among all words representing $g$, and a reduced word represents the trivial element if and only if it is the empty word.
\end{theorem}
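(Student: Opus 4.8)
The plan is to obtain the hard half of the statement --- uniqueness of a reduced word up to syllable shuffling --- by the van der Waerden trick: build a set on which $G$ acts through permutations attached to the vertex groups, arranged so that the action of an element on a fixed base point returns exactly its reduced word. Existence of a reduced representative is the easy half, since the transformations (T1), (T2) and the joining-together operation each strictly decrease the syllable length $\lvert W\rvert\in\N$, so finitely many steps bring any word to reduced form.

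For the uniqueness part, let $\mathcal{R}$ be the set of equivalence classes of reduced words modulo syllable shuffling. For each $v\in V$ and $h\in G_v$ I would define a permutation $\lambda_{v,h}\colon\mathcal{R}\to\mathcal{R}$ by hand. Given a reduced word $W=(g_1,\dots,g_n)$, reducedness forces at most one syllable of $W$ to lie in $G_v$ and be movable to the front of $W$ by shufflings (two such syllables could be joined together). If such a syllable $g_i$ exists, then $W$ is shuffling-equivalent to $(g_i,W_0)$, where $W_0$ is obtained from $W$ by deleting $g_i$, and I set $\lambda_{v,h}(W):=[(hg_i,W_0)]$ if $hg_i\neq e$ and $\lambda_{v,h}(W):=[W_0]$ otherwise; if no such syllable exists, I set $\lambda_{v,h}(W):=[(h,W)]$ if $h\neq e$ and $\lambda_{v,h}(W):=[W]$ otherwise. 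Routine verifications --- this is exactly where the definition of ``joined together'' does the work --- show that the words written down are again reduced, that the recipe descends to a well-defined map on $\mathcal{R}$, that $\lambda_{v,e}=\id$ and $\lambda_{v,h}\lambda_{v,h'}=\lambda_{v,hh'}$ (so each $\lambda_{v,h}$ is a bijection and $h\mapsto\lambda_{v,h}$ is a homomorphism $G_v\to\mathrm{Sym}(\mathcal{R})$), and finally that $\lambda_{u,g_u}\lambda_{v,g_v}=\lambda_{v,g_v}\lambda_{u,g_u}$ whenever $\{u,v\}\in E$ --- the point being that when $u$ and $v$ are adjacent, a $G_u$-syllable at the front of $W$ never blocks a $G_v$-syllable from being pulled to the front. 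By the universal property of the graph product these assemble into a homomorphism $\Phi\colon G\to\mathrm{Sym}(\mathcal{R})$.

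Next I would evaluate $\Phi$ on the class $[\,]$ of the empty word. If $g\in G$ has reduced word $W=(g_1,\dots,g_n)$ with $g_i\in G_{v_i}$, then applying $\lambda_{v_n,g_n},\lambda_{v_{n-1},g_{n-1}},\dots,\lambda_{v_1,g_1}$ to $[\,]$ in succession prepends the syllables one at a time and produces $[W]$: at each stage the syllable being prepended cannot be obstructed, for otherwise it could be joined in $W$ to the offending later syllable in the same vertex group. So $\Phi(g)([\,])=[W]$, and three conclusions follow at once. Two reduced words $W,W'$ representing the same element $g$ satisfy $[W]=\Phi(g)([\,])=[W']$, hence differ by a finite sequence of syllable shufflings. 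Taking $g=e$, and since the empty word is the only element of its shuffling class, a reduced word represents the identity if and only if it is empty. And since any word can be reduced without increasing its length while all reduced words for a fixed $g$ have a common length, the length of a reduced word is minimal among all words representing $g$.

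The one real obstacle is the combinatorial bookkeeping: showing that each $\lambda_{v,h}$ is well defined (independent of the chosen reduced representative, and taking values in reduced words) and, above all, verifying the commutation relations $\lambda_{u,g_u}\lambda_{v,g_v}=\lambda_{v,g_v}\lambda_{u,g_u}$ for edges $\{u,v\}$, which splits into cases according to whether $W$ already has a syllable that can be shuffled to the front in $G_u$, in $G_v$, in both, or in neither. An alternative, trading this bookkeeping for another, is induction on $\lvert V\rvert$: fixing $v\in V$, write $\Gamma\mathcal{G}\cong A\ast_C B$, where $A$ is the graph product over the full subgraph on $V\setminus\{v\}$, $B$ is the direct product of $G_v$ with the graph product over $\link(v)$, and $C$ is the graph product over $\link(v)$, which embeds as a retract in both $A$ and $B$; then combine the normal form theorem for amalgamated free products with the inductive hypothesis --- the cost being the set-up of the retraction and the translation of the amalgam normal form into the language of syllables and shufflings.
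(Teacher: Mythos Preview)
The paper does not prove this statement: it is quoted as Green's result \cite[Theorem~3.9]{Green} and used as a black box, so there is no proof in the paper to compare against.

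That said, your proposal is sound and is essentially the standard argument. The van der Waerden trick --- building an action of $G$ on the set $\mathcal{R}$ of shuffling-classes of reduced words and reading off the class of a reduced representative as the image of $[\,]$ --- is exactly how one proves such normal form theorems, and your outline of the map $\lambda_{v,h}$ and of the verifications is accurate. One small wording issue: your parenthetical justification that two $G_v$-syllables both movable to the front ``could be joined together'' is not quite the reason; rather, if $g_i$ and $g_j$ are both in $G_v$ with $i<j$, then $g_j$ cannot be shuffled past $g_i$ at all (since $v\notin\link(v)$), so at most one $G_v$-syllable is ever movable to the front. The conclusion is the same, and the rest of the argument goes through. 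Your alternative route via the amalgam decomposition $G\cong G_{V\setminus\{v\}}\ast_{G_{\link(v)}}(G_{\link(v)}\times G_v)$ is also viable and is in fact closer to how Green organises the induction; the paper itself records this decomposition (Fact~2.1) but only uses it downstream, not to establish the normal form.
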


Thanks to Theorem \ref{nft} the following are well defined. Let $g \in G$ and let $W \equiv (g_1,\dots, g_n)$ be a reduced word representing $g$. We define the \emph{length} of $g$ in $G$ to be $|g| = n$
and the \emph{support} of $g$ in $G$ to be
 $$\supp(g) = \{v \in V | \exists i \in \{1,\dots, n\} \mbox{ such that } g_i \in G_v\setminus \{e\}\}.$$

Let $x, y \in G$ and let $W_x \equiv (x_1, \dots ,x_n),W_y \equiv (y_1 \dots, y_m)$ be reduced expressions for $x$ and $y$, respectively.
We say that the product $xy$ is a reduced product if the word $(x_1, \dots, x_n, y_1, \dots, y_m)$ is reduced. Obviously, $xy$ is a reduced product if and only if $|xy| = |x| + |y|$. We can naturally extend this definition: for $g_1, \dots, g_n \in G$ we say that the product $g_1 \dots g_n$ is reduced if $|g_1 \dots g_n| = |g_1| + \dots + |g_n|$.

A subset $X \subseteq V$ induces the full subgraph $\Gamma_X$ of $\Gamma$. Let $G_X$ be the subgroup of $G=\Gamma\mathcal{G}$ generated by the vertex groups corresponding to $X$ and, by convention, let $G_{\emptyset}$ be the trivial subgroup. It follows from Theorem \ref{nft} that $G_X$ is isomorphic to the graph product of the family $\mathcal{G}_X =\{G_v \in \mathcal{G}\mid v \in X\}$ with respect to the full subgraph $\Gamma_X$. Subgroups of $G$ that can be obtained in this way are called \emph{full subgroups}. For such 
subgroups, there is a canonical retraction
$\rho_X \colon G \twoheadrightarrow G_X$, defined on the vertex groups as
$$\rho_X(g)=\begin{cases}g&\quad\text{if }g\in G_v\text{ and }v\in X,\\
e&\quad\text{otherwise}.
\end{cases}$$
Thus, $G$ splits as the semidirect product $G \cong \ker(\rho_X)\rtimes G_X$, and full subgroups are retracts of $G$.

\subsection{Special amalgams}
Let $B\leq A$ and $C$ be groups, we define $A \star_B C$, the \emph{special amalgam} of $A$ and $C$ over $B$, to be the free product with amalgamation 
$$A \star_B C:=A \ast_B (B \times C)=\langle A,C \parallel [b,c] = e \ \forall b\in B ,\forall c \in C\rangle.$$
Special amalgams generalise the notion of special HNN extensions: every special HNN extension 
$$A\ast_{\id_B}=\langle A,t \parallel tbt^{-1}=b\ \forall b\in B\rangle$$ 
is isomorphic to $A\star_B \langle t\rangle=A\ast_B (B\times\Z)$.

Graph products can be seen in a natural way as special amalgams of their full subgroups:
\begin{fact}
	\label{graph products split as special amalgams of their full subgroups}
Let $G =\Gamma\mathcal{G}$ be a graph product. Then, for every $v \in V$, we have that $G \cong G_A \star_{G_B} G_C$, where $A = V \setminus \{v\}$, $B = \link(v)$ and $C=\{v\}$. 
\end{fact}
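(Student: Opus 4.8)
The plan is to show that both sides admit the same presentation, or, what amounts to the same thing, to write down mutually inverse homomorphisms using the universal property of amalgamated free products. Set $A = V\setminus\{v\}$, $B = \link(v)$ and $C = \{v\}$, so that $G_C = G_v$. Since $\Gamma$ is simplicial we have $v\notin\link(v)$, hence $B\subseteq A$; consequently $G_B$ is the full subgroup of $G_A$ on the vertex set $B$, and by Theorem~\ref{nft} this agrees with the full subgroup of $G$ on $B$. Thus $G_B\leq G_A\leq G$ with all embeddings compatible, and the special amalgam $G_A\star_{G_B}G_C = G_A\ast_{G_B}(G_B\times G_v)$ makes sense.

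First I would build a homomorphism $\phi\colon G_A\star_{G_B}G_C\to G$. On the factor $G_A$ I take the inclusion $G_A\hookrightarrow G$, and on the factor $G_B\times G_v$ the map $(b,g)\mapsto bg$; the latter is a homomorphism because every syllable of an element of $G_B$ lies in a vertex group $G_u$ with $u\in\link(v)$, and hence commutes in $G$ with every element of $G_v$. These two maps agree on the amalgamated subgroup $G_B$, so the universal property of $G_A\ast_{G_B}(G_B\times G_v)$ produces $\phi$.

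In the other direction I would build $\psi\colon G\to G_A\star_{G_B}G_C$ from the standard presentation $G = \bigl(\ast_{w\in V}G_w\bigr)\big/\langle\!\langle\,[g_u,g_w]:\{u,w\}\in E\,\rangle\!\rangle$: on $G_w$ with $w\in A$ let $\psi$ be the composite $G_w\hookrightarrow G_A\hookrightarrow G_A\star_{G_B}G_C$, and on $G_v = G_C$ let it be the canonical inclusion into the amalgam. One has to check that the defining relations of $G$ are respected. The relations internal to each $G_w$ pose no problem; for a relation $[g_u,g_w]=e$ with $\{u,w\}\in E$, if $v\notin\{u,w\}$ then it already holds in $G_A$, and if $w=v$ then $u\in\link(v)=B$, so $g_u$ lies in $G_B$ and commutes with $g_v\in G_C$ by the very definition of the special amalgam. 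Hence $\psi$ is well defined.

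It remains to observe that $\phi$ and $\psi$ are mutually inverse. The composite $\psi\circ\phi$ restricts to the identity on the generating set $G_A\cup G_C$ of $G_A\star_{G_B}G_C$, and $\phi\circ\psi$ restricts to the identity on the generating set $\bigcup_{w\in V}G_w$ of $G$; since both composites are homomorphisms, each is the identity. Therefore $G\cong G_A\star_{G_B}G_C$, as claimed. The only point that requires genuine care is the compatibility of the various copies of $G_B$ — inside $G_A$, inside $G$, and as a direct factor of $G_B\times G_v$ — which is precisely what the Normal Form Theorem supplies; the remaining verifications are routine comparisons with the standard presentation, so I do not anticipate a real obstacle.
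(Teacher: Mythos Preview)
The paper states this as a \emph{Fact} and gives no proof, so there is nothing to compare against directly; your task reduces to whether the argument stands on its own. It does: building mutually inverse maps via the universal properties of the amalgamated free product and of the graph-product presentation is the standard way to establish this, and you have checked the only non-automatic points --- that $B\subseteq A$ (so $G_B\leq G_A$), that the map $(b,g)\mapsto bg$ from $G_B\times G_v$ to $G$ is a homomorphism (because $B=\link(v)$ forces the commutation in $G$), and that the commutation relators of $G$ involving $v$ are killed in the amalgam (because the other vertex lies in $B$ and $G_B$ centralises $G_C$ there). The appeal to Theorem~\ref{nft} to identify the two copies of $G_B$ is appropriate, and the inverse check on generators is routine. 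No gaps.
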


Consider $G = A \star_B C$, then every element $g \in G$ can be represented as a product $a_0 c_1 a_1 \dots c_n a_n$, where $a_i \in A$ for $i=0,1, \dots, n$ and $c_j \in C$ for $j = 1, \dots, n$. We say that $g = a_0 c_1 a_1 \dots c_n a_n$ is in a \emph{reduced form} if $a_i \notin B$ for $i = 1, \dots, n-1$ and $c_j \neq e$ for $j = 1, \dots, n$. The following lemma was proved in \cite[Lemma 5.3]{mf}.

\begin{lemma}
\label{normal form theorem for special amalgams}
Let $B \leq A$ and $C$ be groups, and consider $G = A \star_B C$. Suppose that $g = a_0 c_1 a_1 \dots c_n a_n$ is in reduced form, where $a_i \in A$, $c_j \in C$ and $n \geq 1$. Then $g$ is not the trivial element of $G$.
	
Moreover, suppose that $f = x_0 y_1 x_1 \dots y_m x_m$ is in reduced form, where $x_i \in A$, $y_j \in C$. If $f = g$ then $m = n$ and $c_i = y_i$ for all $i = 1, \dots, n$.
\end{lemma}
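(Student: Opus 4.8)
The plan is to derive both assertions from the classical Normal Form Theorem for free products with amalgamation, applied to the decomposition $G = A \ast_B (B \times C)$ with factors $A$ and $Q := B \times C$, amalgamated along their common subgroup $B$ (sitting inside $Q$ as $B \times \{e\}$). Everything will hinge on two elementary observations: (i) if $c \in C$ and $c \neq e$, then $(e,c) \in Q$ does not lie in $B = B \times \{e\}$, since its second coordinate is nontrivial; and (ii) the second coordinate of an element of $Q = B \times C$ depends only on its coset modulo $B$, hence is left unchanged by multiplication on either side by elements of $B$. Observation (i) lets me turn a reduced form of the special amalgam into a reduced word over the amalgamated product, and observation (ii) lets me read the syllables $c_i$ back off from it.

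For the first assertion, starting from a reduced form $g = a_0 c_1 a_1 \cdots c_n a_n$ with $n \geq 1$, I would build a word $w$ over $A \ast_B Q$ by viewing each $c_j$ as $(e,c_j) \in Q$, then: if $a_0 \in B$, merge it into the first $Q$-syllable (replacing $a_0\,(e,c_1)$ by the single syllable $(a_0,c_1)$); symmetrically merge $a_n$ into the last $Q$-syllable if $a_n \in B$; and finally delete any syllable equal to the identity. Using observation (i) together with the hypotheses $a_i \notin B$ for $1 \leq i \leq n-1$ and $c_j \neq e$, the resulting word $w$ is strictly reduced in $A \ast_B Q$, meaning that its syllables alternate between $A$ and $Q$ and none of them lies in $B$; moreover $w$ is nonempty because $n \geq 1$. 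The Normal Form Theorem then gives that $w$ represents a nontrivial element, i.e.\ $g \neq e$. The small case $n = 1$ with $a_0, a_n \in B$ collapses $w$ to the single $Q$-syllable $(a_0 a_1, c_1) \neq e$, and is covered by the same argument.

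For the uniqueness clause, given a second reduced form $f = x_0 y_1 x_1 \cdots y_m x_m$ with $f = g$, I would run the same construction to obtain a strictly reduced word $w'$ over $A \ast_B Q$. The key point is that this construction never changes the number of $Q$-syllables, so $w$ has exactly $n$ of them and $w'$ has exactly $m$, with respective second coordinates $c_1, \dots, c_n$ and $y_1, \dots, y_m$. Now I invoke the uniqueness part of the Normal Form Theorem: two strictly reduced words representing the same element of $A \ast_B Q$ have the same number of syllables, their $i$-th syllables lie in the same factor for each $i$, and corresponding syllables differ only by left and right multiplication by elements of the amalgamated subgroup $B$. Comparing $w$ with $w'$ gives $m = n$ from the count of $Q$-syllables, and then observation (ii) forces the second coordinates of corresponding $Q$-syllables to agree, i.e.\ $c_i = y_i$ for all $i$.

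I expect the only real work to be the bookkeeping at the two ends of the word: deciding when $a_0$ (respectively $a_n$) gets absorbed, confirming that the procedure always produces a genuinely reduced word with no syllable in $B$, and checking the degenerate small cases; the interior of the word is handled verbatim by the definition of a reduced form. As an alternative to quoting the classical Normal Form Theorem, one could reprove it in this special situation directly, either by the van der Waerden coset-representative argument (transversals $\{e\} \cup (T_A \setminus \{e\})$ for $B$ in $A$ and $\{(e,c) : c \in C\}$ for $B$ in $Q$) or by analysing the action of $G$ on the Bass--Serre tree of the splitting $A \ast_B Q$; both routes carry the same boundary bookkeeping.
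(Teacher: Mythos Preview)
Your argument is correct. The paper itself does not prove this lemma: it simply records it as \cite[Lemma~5.3]{mf} and moves on, so there is no in-paper proof to compare against. Your route---viewing $G$ as the amalgamated free product $A \ast_B Q$ with $Q = B \times C$, absorbing the boundary letters $a_0, a_n$ into the adjacent $Q$-syllables when they lie in $B$, and then invoking the classical normal form theorem---is the natural one and is essentially how the cited result is obtained. The two observations you isolate are exactly the right ones: (i) guarantees that every syllable of the resulting word lies outside $B$, so the word is genuinely reduced in $A \ast_B Q$, and (ii) makes the $C$-component of each $Q$-syllable an invariant of its $(B,B)$-double coset, which is precisely what survives when one compares two reduced expressions for the same element (corresponding syllables being related by $s_i' = \beta_{i-1}^{-1} s_i \beta_i$ with $\beta_j \in B$). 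The boundary bookkeeping and the degenerate case $n=1$ with $a_0, a_n \in B$ are handled correctly.
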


Special amalgams satisfy a functorial property.
\begin{fact}
\label{functional property of special amalgams}
Let $B\leq A, C, P, Q$ be groups and let $\psi_A \colon A \to P$, $\psi_C \colon C \to Q$ be homomorphisms. By the universal property of amalgamated free products the homomorphisms $\psi_A$ and  $\psi_C$ uniquely extend to the homomorphism $\psi\colon G \rightarrow H$, defined by
\begin{displaymath}
	\psi(g) =	\left\{\begin{array}{ll}
					\psi_A(a)	& \mbox{ if } g = a \mbox{ for some } a\in A,\\
					\psi_C(c)	& \mbox{ if } g = c \mbox{ for some } c\in C,
			 	\end{array}\right.
\end{displaymath}
where $G:= A\star_BC$ and $H:= P \star_{\psi_A(B)}Q$.
\end{fact}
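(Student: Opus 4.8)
The plan is to realise $\psi$ as a formal consequence of the universal property of amalgamated free products, applied to the defining presentations $G = A \star_B C = A \ast_B (B \times C)$ and $H = P \star_{\psi_A(B)} Q = P \ast_{\psi_A(B)}(\psi_A(B)\times Q)$. Let $\iota_P \colon P \hookrightarrow H$ and $\iota_Q \colon \psi_A(B)\times Q \hookrightarrow H$ denote the two canonical homomorphisms associated with the amalgam defining $H$.

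First I would assemble the two homomorphisms into $H$ that the universal property of $A \ast_B (B\times C)$ requires: the map $\iota_P \circ \psi_A \colon A \to H$, and the map $\iota_Q \circ (\psi_A|_B \times \psi_C) \colon B \times C \to H$, where $\psi_A|_B \times \psi_C$ sends $(b,c)$ to $(\psi_A(b), \psi_C(c)) \in \psi_A(B)\times Q$. Second, I would check that these two maps agree on the amalgamated subgroup $B$: an element $b \in B$ is sent by the first map to $\psi_A(b) \in P \subseteq H$, and by the second map (reading $b$ as $(b, e_C)$) to $(\psi_A(b), e_Q) \in \psi_A(B)\times Q$, and these two coincide in $H$ precisely because the amalgamation defining $H$ identifies the copy of $\psi_A(B)$ in $P$ with the corresponding subgroup of $\psi_A(B)\times Q$. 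The universal property then yields a homomorphism $\psi \colon G \to H$; evaluating on generators (with $c$ read as $(e_B, c)$) gives $\psi(a) = \psi_A(a)$ and $\psi(c) = \psi_C(c)$ as claimed.

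An equivalent and slightly more hands-on route is via von Dyck's theorem applied to the presentation $G = \langle A, C \parallel [b,c] = e\ \forall b \in B,\ \forall c \in C\rangle$: the assignment $a \mapsto \psi_A(a)$, $c \mapsto \psi_C(c)$ respects all relations internal to $A$ and to $C$ because $\psi_A$ and $\psi_C$ are homomorphisms, and it respects each relation $[b,c] = e$ because in $H = P \star_{\psi_A(B)} Q$ every element of $\psi_A(B)$ commutes with every element of $Q$, so $[\psi_A(b), \psi_C(c)] = e_H$. For uniqueness I would simply note that $A \cup C$ generates $G$, hence any homomorphism out of $G$ is determined by its restrictions to $A$ and to $C$.

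I do not anticipate a real obstacle: the statement is essentially bookkeeping around the universal property. The only point that needs a moment's care is keeping the two copies of $\psi_A(B)$ inside $H$ straight and verifying that the proposed maps are genuinely consistent on the amalgamated subgroup $B$; once that is checked, both the existence and the uniqueness of $\psi$ follow formally.
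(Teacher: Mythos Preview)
Your argument is correct. The paper does not actually prove this statement: it is labelled a \emph{Fact} and simply asserts that the extension exists by the universal property of amalgamated free products, with no further details. Your write-up supplies exactly the verification the paper leaves implicit, and the alternative route via von Dyck's theorem applied to the presentation $\langle A, C \parallel [b,c]=e\rangle$ is an equally valid (and arguably cleaner) way to see it.
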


\section{Pro-$\C$ topologies on groups}

Let $\C$ be a class of groups and let $G$ be a group. We say that a normal subgroup $N \normleq G$ is a \emph{co-$\C$ subgroup} of $G$ if $G/N \in \C$, and we denote by 
$\NC(G)$ 
the set of co-$\C$ subgroups of $G$. 

Consider the following closure properties for a class of groups $\C$:
	\begin{itemize}
	    \item[(c0)] $\C$ is closed under taking finite subdirect\footnote{A subdirect product of the family $\{G_i\}_{i\in I}$ is a subgroup $H\leq \prod_{i\in I}G_i$ such that the projections $H\twoheadrightarrow G_i$ are surjective for all $i\in I$.} products,
	    \item[(c1)]	$\C$ is closed under taking subgroups,
	    \item[(c2)]	$\C$ is closed under taking finite direct products.
	\end{itemize}
Note that 
$$(c0) \Rightarrow  (c2)\qquad\text{and}\qquad (c1)+ (c2) \Rightarrow (c0).$$
If the class $\C$ satisfies (c0) then, for every group $G$, the set $\NC(G)$ is closed under intersections, that is to say, if $N_1, N_2\in\NC(G)$ then also $N_1\cap N_2\in\NC(G)$. This
implies that $\NC(G)$ is a base at $e_G$ for a topology on $G$.

Hence the group $G$ can be equipped with a group topology, where the base of open sets is given by $$\{gN \mid g\in G,  N \in \NC(G)\}.$$ 
This topology, denoted by $\proC(G)$, is called the \emph{pro-$\mathcal{C}$ topology} on $G$.

If the class $\C$ satisfies (c1) and (c2), or equivalently, (c0) and (c1), then one can easily see that equipping a group $G$ with its $\proC$ topology is a faithful functor from the category of groups to the category of topological groups. That is to say, for groups $G, H$ every homomorphism $\varphi\colon G\to H$ is a continuous map with respect to the corresponding $\proC$ topologies.

A set $X \subseteq G$ is $\C$-\emph{closed} in $G$ if $X$ is closed in $\proC(G)$: for every $g \notin X$ there exists $N \in \NC(G)$ such that the open set $gN$ does not intersect $X$, that is, $gN \cap X = \emptyset$.
This is equivalent to $gN \cap XN=\emptyset$, and hence $\varphi(g) \notin \varphi(X)$ in $G/N$, where $\varphi \colon G \twoheadrightarrow G/N$ is the canonical projection onto the quotient $G/N$. 
Accordingly, a set is $\C$-\emph{open} in $G$ if it is open in pro-$\C(G)$.

The following lemma was proved by Hall \cite[Theorem 3.1]{Hall}.
\begin{lemma}\label{classification of open subgroups}
Let $\C$ be a class of groups closed under subgroups and finite direct products, and let $G$ be a group. A subgroup $H \leq G$ is $\C$-open in $G$ if and only if there is $N \in \NC(G)$ such that $N \leq H$. Moreover, every $\C$-open subgroup of $G$ is $\C$-closed in $G$. 
\end{lemma}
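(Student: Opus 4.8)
The plan is to derive both halves of the statement directly from the definition of the pro-$\C$ topology, obtaining the closedness assertion as a one-line consequence of the open-subgroup characterisation. First I would record the standing remark that, since $\C$ satisfies (c1) and (c2), it satisfies (c0), so $\NC(G)$ is closed under finite intersections and the cosets $\{gN \mid g \in G,\ N \in \NC(G)\}$ genuinely form a base of $\proC(G)$; moreover, since $\C$ is nonempty and closed under subgroups it contains the trivial group, so $G \in \NC(G)$ and in particular $\NC(G) \neq \emptyset$.

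For the characterisation, the implication "$N \leq H$ for some $N \in \NC(G)$ $\Rightarrow$ $H$ is $\C$-open" is immediate: the identity $H = \bigcup_{h \in H} hN$ exhibits $H$ as a union of basic open sets. For the converse, suppose $H$ is $\C$-open. Being a subgroup, $H$ contains $e_G$, so openness provides a basic neighbourhood $gN$ of $e_G$ with $N \in \NC(G)$ and $gN \subseteq H$. From $e_G \in gN$ one gets $g \in N$, hence $gN = N$, and therefore $N \subseteq H$, as required.

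The \enquote{moreover} clause then follows by taking, for a $\C$-open subgroup $H$, a witness $N \in \NC(G)$ with $N \leq H$; the key algebraic fact is that $HN = H$. Given $g \notin H$, if $gN$ met $H$, say $gn \in H$ with $n \in N$, then $g = (gn)n^{-1} \in HN = H$, a contradiction; hence $gN$ is an open set containing $g$ and disjoint from $H$, which is precisely the definition of $H$ being $\C$-closed. (Equivalently, one may observe that $H$ is a union of $N$-cosets and so is its complement, so the complement is open.)

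I do not anticipate a genuine obstacle: the statement unwinds to elementary manipulations once the topology and the notion of $\C$-closedness are made explicit. The two points deserving a moment of care are the non-emptiness of $\NC(G)$, so that a $\C$-open subgroup is guaranteed to contain \emph{some} co-$\C$ subgroup (this uses that $\C$ is nonempty and subgroup-closed), and the identity $HN = H$ for $N \leq H$, which is exactly the mechanism that upgrades \enquote{open} to \enquote{closed} in the topological-group setting.
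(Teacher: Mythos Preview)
Your proof is correct. The paper itself does not give a proof of this lemma at all; it simply attributes the result to Hall \cite[Theorem 3.1]{Hall} and moves on. Your direct argument from the definition of the pro-$\C$ topology is the standard elementary one and has the advantage of being self-contained. The one implicit assumption you flag---that $\C$ is nonempty, so that the trivial group lies in $\C$ and hence $G \in \NC(G)$---is indeed used tacitly throughout the paper (and is explicit, for instance, in the definition of a root class), so there is no mismatch.
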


The following lemma is crucial for the proofs of the next section.
\begin{lemma}\label{retracts of residually-C groups are C-closed}
Let $\C$ be a class of groups closed under subgroups and finite direct products, and let $G$ be a residually~$\C$ group. 
Then a retract $R$ of $G$ is $\C$-closed in $G$.
\end{lemma}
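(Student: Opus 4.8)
The plan is to work with a retraction $\rho\colon G\twoheadrightarrow R$, so that $\rho\circ\rho=\rho$ and $G=\ker\rho\rtimes R$; write $K:=\ker\rho$. By the definition of $\C$-closedness I must exhibit, for every $g\in G\setminus R$, some $N\in\NC(G)$ with $gN\cap R=\emptyset$. The first step is a reduction: given $g\notin R$, set $k:=g^{-1}\rho(g)$. Then $\rho(k)=\rho(g)^{-1}\rho(g)=e$, so $k\in K$, and $k\neq e$ (else $g=\rho(g)\in R$). A direct computation shows that $gN\cap R=\emptyset$ if and only if $k\notin NR$: indeed $gn\in R$ for some $n\in N$ is equivalent to $g^{-1}r\in N$ for some $r\in R$, and $g^{-1}r=k\cdot(\rho(g)^{-1}r)$ where $\rho(g)^{-1}r$ runs over all of $R$, so this occurs precisely when $k\in NR$ (and $NR$ is a subgroup since $N\normleq G$). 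Thus it is enough to find $N\in\NC(G)$ separating $k$ from the set $NR$.

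Next, using that $G$ is residually~$\C$, I would pick $N_0\in\NC(G)$ with $k\notin N_0$. This by itself is insufficient, since $N_0R$ can be far larger than $N_0$; the key move is to pass to
$$N:=N_0\cap\rho^{-1}(N_0),$$
a co-$\C$ subgroup adapted to $\rho$. That $N\in\NC(G)$ follows because $G/N$ embeds into $G/N_0\times G/\rho^{-1}(N_0)$, where $G/\rho^{-1}(N_0)\cong R/(N_0\cap R)\cong N_0R/N_0$ is a subgroup of $G/N_0\in\C$ and hence lies in $\C$ by closure under subgroups; then the direct product and the subgroup $G/N$ lie in $\C$ by closure under finite direct products and subgroups. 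The point of this choice is that $\rho(N)\subseteq N$: if $n\in N$ then $\rho(n)\in N_0$ and, by idempotency of $\rho$, $\rho(\rho(n))=\rho(n)\in N_0$, so $\rho(n)\in N_0\cap\rho^{-1}(N_0)=N$. Also $k\notin N$ since $N\subseteq N_0$.

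Finally I would deduce $k\notin NR$. If $k=nr$ with $n\in N$ and $r\in R$, then applying $\rho$ and using $\rho(k)=e$ and $\rho(r)=r$ yields $e=\rho(n)r$, so $r=\rho(n)^{-1}$; but $\rho(n)\in\rho(N)\subseteq N$, hence $r\in N$ and therefore $k=nr\in N$, contradicting $k\notin N$. By the reduction of the first paragraph this gives $gN\cap R=\emptyset$, proving that $R$ is $\C$-closed. The main obstacle here is conceptual rather than computational: residual~$\C$-ness on its own only separates the single element $k$, with no compatibility with the retraction, and the whole argument hinges on recognising that the symmetrised subgroup $N_0\cap\rho^{-1}(N_0)$ repairs this — its membership in $\NC(G)$ being exactly where closure of $\C$ under subgroups (to control $R/(N_0\cap R)$) and finite direct products is used.
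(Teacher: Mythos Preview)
Your proof is correct and is essentially the same argument as the paper's, just unpacked: the paper defines $\psi(f)=(\varphi(f),\varphi(\rho(f)))\in C\times C$ and separates $g$ from $R$ via the diagonal, and $\ker\psi$ is exactly your $N=N_0\cap\rho^{-1}(N_0)$ with $N_0=\ker\varphi$. Your explicit reduction to $k=g^{-1}\rho(g)\notin NR$ and the check that $\rho(N)\subseteq N$ are the hands-on version of the paper's observation that $\psi(R)$ lands in the diagonal while $\psi(g)$ does not.
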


\begin{proof}
	Let $\rho \colon G \twoheadrightarrow R$ be the retraction corresponding to $R$ and let $g \in G \setminus R$ be arbitrary. Note that $\rho(g) \neq g$ as $g \not\in R$. By assumption, there is $C \in \C$ and a homomorphism $\varphi \colon G \twoheadrightarrow C$ such that $\varphi(\rho(g)) \neq \varphi(g)$ in $C$.
	
	Let $\psi \colon G \to C \times C$ be the homomorphism defined by $\psi(f) = (\varphi(f), \varphi(\rho(f)))$ for all $f \in G$. Let $D = \{(c,c) \in C\times C \mid c \in C\} \leq C\times C$ be the diagonal subgroup of $C \times C$. Clearly $\psi(R) \leq D$ and $\psi(g) \not\in D$, thus $\psi(g) \not \in \psi(R)$. As the class $\C$ is closed under taking subgroups and direct products we see that $\psi(G) \in \C$ and so $\ker(\psi) \in \NC(G)$ and $g\ker(\psi) \cap R = \emptyset$. Hence $R$ is $\C$-closed in $G$. 
\end{proof}

When the class $\C$ contains only finite groups this statement has been proved in \cite[Lemma 3.1.5]{RZ}.

\section{Graph products of residually $\C$ groups}
\label{res.section}
The following was proved in \cite[Lemma 6.6]{mf}.
\begin{lemma}	\label{special amalgams of C-groups are free-by-C}
Let $\C$ be a class of groups closed under finite direct products, let $A, C \in \C$ and suppose that $B \leq A$. Then the special amalgam $G = A\star_B C$ is free-by-$\C$. 
\end{lemma}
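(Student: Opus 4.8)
The plan is to exhibit an explicit free normal subgroup of $G = A \star_B C$ with quotient in $\C$. Recall that $A \star_B C = A \ast_B (B \times C)$, so there is an obvious surjection $\pi \colon G \twoheadrightarrow A \times C$ defined on the two factors by sending $a \in A$ to $(a, e)$ and $c \in C$ to $(e, c)$; this is well-defined because the amalgamating relations $[b, c] = e$ for $b \in B$, $c \in C$ are satisfied in $A \times C$, and the two copies of $B$ (the one in $A$ and the one in $B \times C$) both map to $(b, e)$. Since $A, C \in \C$ and $\C$ is closed under finite direct products, $A \times C \in \C$. So it suffices to show $N := \ker(\pi)$ is a free group.

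First I would recall the standard structure theory of amalgamated free products. Since $G = A \ast_B (B \times C)$, by the normal form theorem (Lemma~\ref{normal form theorem for special amalgams} gives exactly the reduced-form control needed here, or one can cite the Bass--Serre tree $T$ on which $G$ acts with edge stabilizers conjugates of $B$ and vertex stabilizers conjugates of $A$ and of $B \times C$), the kernel $N$ acts on $T$, and I would show this action is free, i.e. $N$ meets every vertex stabilizer trivially. Indeed, a conjugate $gAg^{-1}$ of $A$ maps under $\pi$ isomorphically onto $A \times \{e\}$ (since $\pi|_A$ is injective and $\pi(g)$ conjugation is an automorphism of $A \times C$ preserving $A \times \{e\}$... more carefully: $\pi(gAg^{-1}) = \pi(g)(A \times \{e\})\pi(g)^{-1}$, and since $\pi|_A$ is injective, $N \cap gAg^{-1} = g(N \cap A)g^{-1} = \{e\}$), so $N$ intersects every conjugate of $A$ trivially. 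For a conjugate of $B \times C$: here $\pi|_{B \times C}$ is also injective (it sends $(b,c)$ to $(b,c) \in A \times C$), so again $N$ meets every conjugate of $B \times C$ trivially. Hence $N$ acts freely on the tree $T$, and by the fundamental theorem of Bass--Serre theory a group acting freely on a tree is free. Therefore $N$ is free, and $G$ is free-by-$\C$.

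The one point requiring care — and the main obstacle to watch for — is verifying that $\pi$ restricted to the edge and vertex groups is injective, which is what makes the action free; this is where the hypothesis $B \leq A$ (so that $B$ embeds in both $A$ and $B \times C$ compatibly) and the specific ``special'' form of the amalgam are used. One should note that no hypothesis on $B$ beyond being a subgroup of $A$ is needed, and $\C$ need only be closed under finite direct products, matching the statement. Alternatively, if one prefers to avoid Bass--Serre language entirely, one can argue directly from Lemma~\ref{normal form theorem for special amalgams}: an element of $N$ in reduced form $a_0 c_1 a_1 \cdots c_n a_n$ must have all $\pi$-images cancel, forcing each $a_i \in B$ for $0 < i < n$ and each $c_i \in C$ trivial unless... and then a Kurosh-type / ping-pong argument on reduced words shows $N$ is free on an explicit generating set; but the tree-action argument is cleaner, so I would present that.
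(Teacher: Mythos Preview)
Your argument is correct: the retraction $\pi\colon G \twoheadrightarrow A \times C$ is well defined on the amalgam, restricts injectively to each factor $A$ and $B \times C$, and therefore the kernel meets every vertex stabilizer of the Bass--Serre tree trivially; hence $\ker(\pi)$ acts freely on a tree and is free, while $A\times C\in\C$ by the closure hypothesis. This is exactly the expected proof.

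Note, however, that the paper does not supply its own proof of this lemma: it simply records the statement and cites \cite[Lemma~6.6]{mf}. So there is nothing in the present paper to compare against beyond the bare citation. Your Bass--Serre argument is the standard one and is almost certainly what the cited reference does as well; in any case it is complete and matches the hypotheses of the lemma precisely (only closure under finite direct products is used, and only $B\leq A$ is needed to make $\pi$ well defined and injective on the factors). The aside about a direct Kurosh-style argument via reduced words is unnecessary for the write-up; the tree argument you give is the clean one.
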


In the following proposition we characterise precisely which special 
amalgams are residually $\C$. This should be compared with \cite[Theorem 1.9]{fb}, where a similar statement can be found.
\begin{proposition}
    \label{special amalgams over closed are residually C}
    Let $B \leq A, C$ be groups and suppose that $\C$ is a class of groups closed under taking subgroups, finite direct products and that free-by-$\C$ groups are residually $\C$. 
    
The group $G = A \star_B C$ is residually $\C$ if and only if
$A,C$ are residually $\C$ and $B$ is $\C$-closed in $A$.
\end{proposition}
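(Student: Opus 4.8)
The plan is to prove the two implications separately, leaning on the normal form for special amalgams (Lemma~\ref{normal form theorem for special amalgams}), its functoriality (Fact~\ref{functional property of special amalgams}), and the structural input that special amalgams of $\C$-groups are free-by-$\C$ (Lemma~\ref{special amalgams of C-groups are free-by-C}). For the ``only if'' direction, suppose $G = A\star_B C$ is residually~$\C$. Since $A$ and $C$ sit inside $G$ as the factors of the underlying amalgamated free product and $\C$ is subgroup-closed, restricting separating homomorphisms shows immediately that $A$ and $C$ are residually~$\C$. To prove that $B$ is $\C$-closed in $A$, I would fix $a\in A\setminus B$ and a non-trivial $c\in C$ (when $C=\{e\}$ one has $G\cong A$ and the assertion is degenerate, so assume $C\neq\{e\}$) and look at the commutator $[a,c]=aca^{-1}c^{-1}$. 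Written as $a\cdot c\cdot a^{-1}\cdot c^{-1}\cdot e$, this expression is in reduced form in $A\star_B C$ --- its single interior $A$-syllable is $a^{-1}\notin B$, and both $C$-syllables are non-trivial --- so $[a,c]\neq e$ by Lemma~\ref{normal form theorem for special amalgams}. Choose $\varphi\colon G\twoheadrightarrow D\in\C$ with $\varphi([a,c])\neq e$, i.e.\ with $\varphi(a)$ not commuting with $\varphi(c)$, and put $N:=\ker(\varphi|_A)$; then $A/N\cong\varphi(A)\leq D$, so $N\in\NC(A)$. Every element of $\varphi(B)$ commutes with $\varphi(c)$ because $B$ centralises $c$ in $G$, whereas $\varphi(a)$ does not, so $\varphi(a)\notin\varphi(B)$, that is, $aN\cap B=\emptyset$. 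As $a$ was arbitrary, $B$ is $\C$-closed in $A$.

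For the ``if'' direction, assume $A$ and $C$ are residually~$\C$ and $B$ is $\C$-closed in $A$, and take $e\neq g\in G$ in reduced form $g=a_0c_1a_1\cdots c_na_n$. Using $\C$-closedness of $B$ I would pick, for each interior index $i\in\{1,\dots,n-1\}$, a subgroup $N_i\in\NC(A)$ with $a_iN_i\cap B=\emptyset$; using residual $\C$-ness of $A$, a subgroup $N_0\in\NC(A)$ with $a_0\notin N_0$ (needed only when $n=0$); and using residual $\C$-ness of $C$, for each $j$ a subgroup $M_j\in\NC(C)$ with $c_j\notin M_j$. As $\C$ is closed under subgroups and finite direct products, $N:=\bigcap_i N_i\in\NC(A)$ and $M:=\bigcap_j M_j\in\NC(C)$. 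Feeding the quotient maps $\psi_A\colon A\twoheadrightarrow P:=A/N$ and $\psi_C\colon C\twoheadrightarrow Q:=C/M$ into Fact~\ref{functional property of special amalgams} produces $\psi\colon G\to H:=P\star_{\psi_A(B)}Q$, and by construction $\psi(g)=\psi_A(a_0)\psi_C(c_1)\cdots\psi_C(c_n)\psi_A(a_n)$ is in reduced form in $H$; hence $\psi(g)\neq e$ by Lemma~\ref{normal form theorem for special amalgams} when $n\geq 1$, while for $n=0$ one has $\psi(g)=\psi_A(a_0)\neq e$ already in $P\leq H$. Finally $H$ is a special amalgam of the $\C$-groups $P$ and $Q$, hence free-by-$\C$ by Lemma~\ref{special amalgams of C-groups are free-by-C}, hence residually~$\C$ by hypothesis; composing a homomorphism $H\twoheadrightarrow D'\in\C$ separating $\psi(g)$ with $\psi$ and then corestricting onto the image gives a $\C$-quotient of $G$ in which $g$ survives.

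The one genuinely load-bearing point --- and the unique place the $\C$-closedness hypothesis enters --- is keeping the pushed-forward word $\psi(g)$ reduced in $P\star_{\psi_A(B)}Q$, which forces us to separate every interior syllable $a_i$ from $B$ at the same time; closure of $\C$ under finite direct products is exactly what lets a single quotient of $A$ do this. After that, the proof is a formal reduction to the granted fact that free-by-$\C$ groups are residually~$\C$. The ``only if'' direction is the easier half, its lone idea being to probe $\C$-closedness of $B$ in $A$ with commutators $[a,c]$, which simultaneously detect membership in $B$ and are automatically non-trivial in the special amalgam.
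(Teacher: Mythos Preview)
Your proof is correct and follows essentially the same approach as the paper's: the ``if'' direction pushes a reduced expression for $g$ through Fact~\ref{functional property of special amalgams} to a special amalgam of $\C$-groups (then invokes Lemma~\ref{special amalgams of C-groups are free-by-C} and the free-by-$\C$ hypothesis), and the ``only if'' direction uses the commutator $[a,c]$ to detect membership in $B$. The only cosmetic differences are that you make the finite intersection $N=\bigcap_i N_i$ explicit where the paper silently produces a single $\alpha$, and you argue $\C$-closedness of $B$ directly rather than by contradiction; you also flag the edge case $C=\{e\}$, which the paper omits (note that in this case the ``only if'' direction actually need not hold, so ``degenerate'' undersells it, but the paper has the same tacit assumption).
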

\begin{proof}
Suppose that $A, B$ are residually $\C$ and that the subgroup $B$
is $\C$-closed in $A$. We need to prove that the group $G$ is residually $\C$.

    Let $g \in G \setminus \{e\}$ be arbitrary and let $g = a_0 c_1 a_1 \dots c_n a_n$, where $a_i\in A$ for $i=0,\dots, n$, $c_j\in C$ for $j = 1,\dots, n$, be a reduced expression.

There are two cases to consider. If $n = 0$, then $g = a_0 \in A\setminus\{e\}$. Note that $A$ is a retract of $G$ and thus for the canonical retraction $\rho_A \colon G \to A$ we have $\rho_A(a_0) = a_0 \neq e$ in $A$. The group $A$ is residually $\C$, so there is a group $H \in \C$ and a surjective homomorphism $\varphi \colon A \twoheadrightarrow H$ such that $\varphi(a_0) \neq e_H$. We see that $(\phi \circ \rho_A)(g) \neq e_H$.

Suppose now that $n \geq 1$. As $B$ is $\C$-closed in $A$, there is a group $Q \in \C$ and a surjective homomorphism $\alpha \colon A \twoheadrightarrow Q$ such that $\alpha(a_i) \notin \alpha(B)$ for all $i=1,\dots, n-1$. 
Moreover, $C$ is residually $\C$, so there exists a group $S \in \C$ and a surjective homomorphism $\gamma \colon C \twoheadrightarrow S$ such that $\gamma(c_i) \neq e_S$ for all $i = 1, \dots, n$.
Let $\psi \colon G \to P$, where $P = Q \star_{\alpha(B)}S$, be the canonical extension of $\alpha$ and $\gamma$ given by Fact \ref{functional property of special amalgams}.
It follows that
	\begin{displaymath}
 		\psi(g) = \alpha(a_0) \gamma(c_1) \alpha(a_1) \dots \gamma(c_n) \alpha(a_n)
	\end{displaymath}
	is a reduced expression for $\psi(g)$ in $P$. Hence $\psi(g)\neq e_P$ by Lemma \ref{normal form theorem for special amalgams}. 
	
The group $P$ is free-by-$\C$ by Lemma \ref{special amalgams of C-groups are free-by-C}, and thus residually $\C$ by assumption. Hence, $G$ is residually $\C$.

It remains to prove the other implication. So, suppose that $G$ is residually $\C$.
As $A, C\leq G$ and $\C$ is closed under subgroups, it follows that the groups $A$ and $C$ are residually $\C$.

Looking for a contradiction, suppose that $B$ is not $\C$-closed in $A$. Then there exists an element $a\in A\setminus B$ such that
$\varphi(a)\in\varphi(B)$ for all surjective homomorphisms $\varphi
\colon A\twoheadrightarrow Q$, with $Q\in\C$.

Let $c\in C$ be a non-trivial element, then the element
$g:=[a,c]\in G$ is not trivial, as $a\notin B$ and $C$ only commutes with $B$.

The group $G$ is residually $\C$, hence there exist a group 
$Q\in\C$ and a surjective homomorphism $\varphi\colon G\twoheadrightarrow Q$ such that $\varphi(g)\neq e_Q$.
Let $K:=\ker\varphi$.

By the choice of the element $a$, it follows that $\varphi(a)\in \varphi(B)$. Moreover, $B$ and $C$ commute elementwise in $G$, so 
$\varphi(B)$ and $\varphi(C)$ commute elementwise in $Q$.
This implies that 
$$\varphi(g)=[\varphi(a),\varphi(c)]\in [\varphi(B),\varphi(C)]=\{e_Q\},$$
contradicting the assumption $\varphi(g)\neq e_Q$.
Hence, $B$ is $\C$-closed in $A$.

\end{proof}

\begin{thA}
Let $\C$ be a class of groups closed under taking subgroups and finite direct products. Assume that free-by-$\C$ groups are residually~$\C$, then the class of residually~$\C$ groups is closed under taking graph products.
\end{thA}
\begin{proof}
    Let $\Gamma$ be a graph and let $\mathcal{G} = \{G_v \mid v \in V\}$ be a family of residually~$\C$ groups. We want to prove that the graph product $G:=\Gamma\mathcal{G}$ is residually $\mathcal{C}$.
	
Let $g \in G$ be a non-trivial element and set $S =\supp(g)$. Consider the canonical projection 
$\rho_S\colon G\twoheadrightarrow G_S$ onto the graph product associated to the finite graph 
$\Gamma_S$. 
As $\rho_S(g)=g\neq e$, without loss of generality we can assume that the graph $\Gamma$ is itself finite.

We proceed by induction on $\lvert V\rvert$. If $\lvert V\rvert = 1$ then $G = G_v$ is residually $\C$ by assumption.

Suppose now that $\lvert V\rvert =r>1$ and that the statement holds for all graph products on graphs with at most $r-1$ vertices.

Fix a vertex $v \in V$ and let 
$$A:=V\setminus \{v\},\qquad B:= \link(v),\qquad  C:=\{v\}.$$ 
From Fact \ref{graph products split as special amalgams of their full subgroups} it follows that $G = G_A \star_{G_B} G_C$. Moreover, $G_A$ is a graph product of residually $\C$ groups with respect to a graph with  $r-1$ vertices, hence $G_A$ is residually $\C$ by the induction hypothesis. Note that $G_C$ is a vertex group, thus it is residually $\C$ by assumption. Finally, $G_B$ is a retract of $G_A$ and thus $G_B$ is $\C$-closed in $G_A$ by Lemma \ref{retracts of residually-C groups are C-closed}. 

Hence, applying Proposition \ref{special amalgams over closed are residually C}, 
we see that $G$ is residually $\C$.
\end{proof}

To give some examples of classes satisfying these properties, we recall here the notion of a root class.
A non-trivial, non-empty, class of groups $\mathcal{R}$ is called a \emph{root class} if it is closed under
taking subgroups, and for every group $G$ and every subnormal series $K\trianglelefteq H\trianglelefteq G$ such that $G/H, H/K\in\C$, there exists $L\trianglelefteq G$ such that $L\subseteq K$ and $G/L \in \C$.

Finite groups, finite $p$-groups, and (finite) solvable groups are 
examples of root classes \cite{Gruenberg57}.
This notion was introduced by Gruenberg \cite{Gruenberg57} who proved that, when $\mathcal{R}$ is a root class, a free product of residually $\mathcal{R}$ groups is residually $\mathcal{R}$.
In \cite[Theorem 1.1, Lemma 3.3]{fb}, with the aim to generalise this result, the first named author proved the following.

\begin{lemma}\label{lemma.boh}
Let $\C$ be a non-trivial class of groups such that
\begin{enumerate}
\item $\C$ is closed under taking finite direct products and contains a root class $\mathcal{R}$,
\item every $\mathcal{R}$-by-$\C$ group sits in $\C$,
\item for every group in $\mathcal{C}$ there exists a group in $\mathcal{R}$ of the same cardinality.
\end{enumerate}
Then a free-by-$\C$ group is residually $\C$ and a free product of residually $\C$ groups is again residually $\C$.
\end{lemma}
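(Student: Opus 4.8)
The claimed statement splits into two assertions, and the plan is to prove first that \textbf{every free-by-$\C$ group is residually~$\C$}, and then to bootstrap from this the fact that \textbf{a free product of residually~$\C$ groups is residually~$\C$}. Both arguments rest on the classical observation, going back to Gruenberg, that a free group is residually~$\mathcal{R}$ as soon as $\mathcal{R}$ is a nontrivial, subgroup-closed root class: such an $\mathcal{R}$ necessarily contains $\Z$ or some $\Z/p\Z$, whence by the root-class axiom it contains all finitely generated torsion-free nilpotent groups (respectively all finite $p$-groups), and any prescribed nontrivial element of a free group $F$ is detected in the corresponding quotient of a suitable finitely generated free retract of $F$. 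We will use below that this detecting $\mathcal{R}$-quotient can be taken of controlled cardinality.

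\emph{First assertion.} Let $G$ be free-by-$\C$, fix a free normal subgroup $F\trianglelefteq G$ with $Q:=G/F\in\C$, and take $g\in G\setminus\{e\}$. If $g\notin F$, then the projection $G\twoheadrightarrow Q\in\C$ already detects $g$, so we may assume $g\in F$. The goal is to produce a normal subgroup $N\trianglelefteq G$ with $N\leq F$, $g\notin N$ and $F/N\in\mathcal{R}$: once this is done, the quotient $G/N$ has the normal subgroup $F/N\in\mathcal{R}$ with quotient $(G/N)/(F/N)\cong Q\in\C$, so $G/N$ is $\mathcal{R}$-by-$\C$ and therefore lies in $\C$ by hypothesis~(2), while $G\twoheadrightarrow G/N$ detects $g$. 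To build $N$ one may take, when $\mathcal{R}$ contains the free solvable groups (which covers the solvable, elementary amenable and amenable cases), an appropriate term $F^{(k)}$ of the derived series of $F$: it is characteristic in $F$, hence normal in $G$, its quotient $F/F^{(k)}$ is solvable and so lies in $\mathcal{R}$, and $g\notin F^{(k)}$ for $k$ large since free groups are residually solvable. In general one instead chooses a surjection $\varphi\colon F\twoheadrightarrow R_0$ onto a group $R_0\in\mathcal{R}$ of least cardinality with $\varphi(g)\neq e$ and sets $N:=\bigcap_{\gamma\in G}\gamma\ker(\varphi)\gamma^{-1}$; this is normal in $G$, contained in $F$, and avoids $g$, while $F/N$ embeds into a product of at most $|Q|$ copies of $R_0$, so that hypothesis~(3) together with the root-class closure properties is precisely what is needed to keep $F/N$ inside $\mathcal{R}$.

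\emph{Second assertion.} Let $G=\ast_{i\in I}A_i$ with each $A_i$ residually~$\C$, and let $g\in G\setminus\{e\}$ have reduced form $g=a_1a_2\cdots a_n$, where $a_k$ is a nontrivial syllable lying in some factor $A_{i_k}$ and $i_k\neq i_{k+1}$. For each index $i$ occurring among $i_1,\dots,i_n$, residual~$\C$-ness of $A_i$ together with closure of $\C$ under finite direct products (c2) yields a surjection $\mu_i\colon A_i\twoheadrightarrow D_i\in\C$ that is nontrivial on every syllable of $g$ lying in $A_i$; send every remaining factor onto the trivial group. The induced surjection $\mu\colon G\twoheadrightarrow\ast_{i\in I}D_i$ carries $g$ to a reduced word, which is therefore nontrivial. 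Only finitely many of the $D_i$ are nontrivial, so $\ast_{i\in I}D_i$ is isomorphic to a free product $D_{j_1}\ast\cdots\ast D_{j_m}$ of finitely many members of $\C$; the kernel of the canonical homomorphism $D_{j_1}\ast\cdots\ast D_{j_m}\twoheadrightarrow D_{j_1}\times\cdots\times D_{j_m}$ meets no conjugate of a factor, hence is free by the Kurosh subgroup theorem, and the quotient lies in $\C$ by (c2). Thus $D_{j_1}\ast\cdots\ast D_{j_m}$ is free-by-$\C$, hence residually~$\C$ by the first assertion; detecting $\mu(g)\neq e$ in a $\C$-quotient and precomposing with $\mu$ shows that $G$ is residually~$\C$.

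\emph{Main obstacle.} The delicate point is the construction of $N$ in the first assertion when the free part $F$ is infinitely generated: a general root class need not be closed under arbitrary direct powers, so one must use hypothesis~(3) to bound the number of relevant conjugates by $|Q|$ and then appeal carefully to the root-class axioms to certify that $F/N\in\mathcal{R}$. When $\mathcal{R}$ consists of finite groups, hypothesis~(3) forces every group of $\C$ to be finite, and this case is best treated separately, reducing essentially to the residual finiteness of virtually free groups. Everything else is routine bookkeeping with reduced forms.
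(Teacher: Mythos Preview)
The paper does not give its own proof of this lemma: it is quoted verbatim from \cite[Theorem~1.1, Lemma~3.3]{fb} and used as a black box. So there is no in-paper argument to compare against; what one can do is check your outline against the standard proof in \cite{fb} and \cite{Gruenberg57}.

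Your reduction for the second assertion (free products) is correct and is exactly the classical one: detect the syllables in $\C$-quotients of the factors, pass to a finite free product of $\C$-groups, observe that the kernel of the map to the direct product is free by Kurosh, and invoke the first assertion.

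For the first assertion your skeleton is right, but the step you yourself flag as ``the delicate point'' is not actually carried out. You take $N=\bigcap_{\gamma\in G}\gamma\ker(\varphi)\gamma^{-1}$ and note that $F/N$ embeds in at most $|Q|$ copies of $R_0$; you then assert that ``hypothesis~(3) together with the root-class closure properties is precisely what is needed'' to force $F/N\in\mathcal{R}$, without saying how. The missing ingredient is Gruenberg's wreath-product lemma: for $A,B\in\mathcal{R}$ one applies the root-class axiom to the chain $K\trianglelefteq A^{B}\trianglelefteq A\wr B$, where $K$ is the kernel of projection to the identity coordinate; any $L\trianglelefteq A\wr B$ with $L\le K$ lies in every $B$-conjugate of $K$, hence $L=1$, so $A\wr B\in\mathcal{R}$ and therefore $A^{B}\in\mathcal{R}$. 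Hypothesis~(3) then supplies $S\in\mathcal{R}$ with $|S|=|Q|$, giving $R_0^{|Q|}\cong R_0^{S}\in\mathcal{R}$ and hence $F/N\in\mathcal{R}$. Without this lemma the argument is incomplete, since root classes are \emph{not} in general closed under infinite direct powers.

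Two minor remarks: the qualifier ``of least cardinality'' on $R_0$ plays no role, and there is no need to treat the case of finite $\mathcal{R}$ separately --- the wreath-product argument above already covers it.
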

Root classes satisfy the assumptions of this lemma. 
Using Theorem A we extend these results to graph products.

\begin{corollary}\label{resR}
Let $\mathcal{R}$ be a root class. Then the class of residually $\mathcal{R}$ groups is closed under taking graph products.
\begin{proof}
Root classes are closed under taking subgroups, finite direct products and free-by-$\mathcal{R}$ groups are residually $\mathcal{R}$.
Hence we can apply Theorem A.
\end{proof}
\end{corollary}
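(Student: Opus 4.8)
The plan is to deduce this immediately from Theorem A, so the only work is to check that the class $\C=\mathcal{R}$ satisfies its three hypotheses: closure under subgroups, closure under finite direct products, and that free-by-$\mathcal{R}$ groups are residually $\mathcal{R}$. Closure under subgroups is built into the definition of a root class, so nothing is needed there.

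For closure under finite direct products, I would argue as follows. Given $G_1,G_2\in\mathcal{R}$, apply the root-class condition to the subnormal series $\{e\}\trianglelefteq \{e\}\times G_2 \trianglelefteq G_1\times G_2$, whose successive quotients are isomorphic to $G_2$ and $G_1$ and hence lie in $\mathcal{R}$. This produces $L\trianglelefteq G_1\times G_2$ with $L\subseteq \{e\}$, so $L$ is trivial and $G_1\times G_2=(G_1\times G_2)/L\in\mathcal{R}$; induction on the number of factors then gives closure under all finite direct products. The same one-step argument applied to a series $\{e\}\trianglelefteq N\trianglelefteq G$ shows that $\mathcal{R}$ is closed under extensions, i.e. every $\mathcal{R}$-by-$\mathcal{R}$ group lies in $\mathcal{R}$; this observation will be reused below.

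For the remaining hypothesis, that free-by-$\mathcal{R}$ groups are residually $\mathcal{R}$, I would invoke Lemma \ref{lemma.boh} with $\C=\mathcal{R}$. Its three assumptions hold: (1) $\mathcal{R}$ is closed under finite direct products, as just shown, and contains the root class $\mathcal{R}$ itself; (2) every $\mathcal{R}$-by-$\mathcal{R}$ group lies in $\mathcal{R}$, by the extension-closure observed above; (3) trivially, every group in $\mathcal{R}$ has a group in $\mathcal{R}$ of the same cardinality, namely itself. Hence Lemma \ref{lemma.boh} yields that free-by-$\mathcal{R}$ groups are residually $\mathcal{R}$.

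With all three hypotheses of Theorem A verified for $\C=\mathcal{R}$, Theorem A gives at once that the class of residually $\mathcal{R}$ groups is closed under taking graph products. I do not expect any real obstacle here: the entire substance of the argument is carried by Theorem A and by Lemma \ref{lemma.boh}, and the only genuine verification is the elementary one that the defining property of a root class forces closure under finite direct products and under extensions.
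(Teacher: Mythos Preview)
Your proof is correct and follows the same route as the paper: verify the hypotheses of Theorem~A for $\C=\mathcal{R}$ and conclude. The only difference is that you spell out the easy verifications (closure under finite direct products and extensions via the root-class axiom, and the appeal to Lemma~\ref{lemma.boh} for free-by-$\mathcal{R}$ being residually $\mathcal{R}$), whereas the paper simply asserts these facts as known.
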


Using this, we recover Green's result that residually finite and residually $p$-finite groups are closed under taking graph products \cite[Corollary 5.4, Theorem 5.6]{Green}. 
Corollary \ref{resR} also covers \cite[Lemma 6.8]{mf} for the class
of residually finite solvable groups. Moreover, it yields
the same statement for residually solvable groups:
\begin{corollary}\label{residually solvable}
Graph products of residually finite groups are residually finite. The same holds for the classes of 
residually $p$-finite groups, residually finite solvable groups
and residually solvable groups.
\end{corollary}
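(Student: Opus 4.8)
The plan is to read off all four assertions from Corollary~\ref{resR}: that corollary says that for every root class $\mathcal{R}$ the class of residually~$\mathcal{R}$ groups is closed under graph products, so it suffices to check that each of the relevant classes --- finite groups, finite $p$-groups, finite solvable groups, and solvable groups --- is a root class, and then to invoke Corollary~\ref{resR} once for each of them. One should keep in mind the naming conventions fixed in the notation: ``residually $p$-finite'' means residually (finite $p$-group), and ``residually finite solvable'' means residually (finite solvable group), so the four classes above are indeed the ones appearing in the statement.

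For finite groups, finite $p$-groups, and finite solvable groups, being a root class is classical and was already recalled after Gruenberg~\cite{Gruenberg57}; the only verification I would spell out is the one for the class of all solvable groups. It is non-trivial and closed under subgroups, so only the condition on subnormal series has to be checked. If $K \trianglelefteq H \trianglelefteq G$ with $H/K$ and $G/H$ solvable of derived lengths at most $d_1$ and $d_2$ respectively, then $G^{(d_2)} \leq H$, and therefore $G^{(d_1+d_2)} = \bigl(G^{(d_2)}\bigr)^{(d_1)} \leq H^{(d_1)} \leq K$. Hence $L := G^{(d_1+d_2)}$ is a characteristic subgroup of $G$ contained in $K$ with $G/L$ solvable, which is exactly what the root-class axiom demands.

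There is no genuine obstacle in this corollary: once the four classes are identified as root classes, the result is an immediate application of Corollary~\ref{resR}, and the only point deserving a line of justification is the derived-length computation for solvable groups indicated above.
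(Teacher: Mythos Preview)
Your proposal is correct and follows exactly the paper's approach: the paper states this corollary immediately after Corollary~\ref{resR} and derives it by noting (with a citation to Gruenberg~\cite{Gruenberg57}) that finite groups, finite $p$-groups, and (finite) solvable groups are root classes. Your added derived-length verification for the class of all solvable groups is more explicit than the paper, which simply folds that case into the Gruenberg citation as well, but the argument is the same.
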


On the other hand, the class of amenable groups and the class of elementary amenable groups are not root classes, yet they satisfy the assumptions of Lemma \ref{lemma.boh}. Hence, \cite[Corollary 3.4]{fb} reads as follows.
\begin{fact}	
\label{free-by-A}
	If $G$ is a free-by-amenable group, then $G$ is residually amenable. Moreover, if $G$ is free-by-(elementary amenable), then $G$ is residually elementary amenable.
\end{fact}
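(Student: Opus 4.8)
The plan is to obtain this as a direct application of Lemma~\ref{lemma.boh}, so the whole task reduces to checking that the class of amenable groups, and separately the class of elementary amenable groups, satisfies hypotheses (1)--(3) of that lemma for a suitable choice of root class $\mathcal{R}$.

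In both cases I would take $\mathcal{R}$ to be the class of \emph{all} solvable groups. This is a root class: it is plainly closed under subgroups, and given a subnormal series $K\trianglelefteq H\trianglelefteq G$ with $G/H$ and $H/K$ solvable, the normal core $L=\bigcap_{g\in G}gKg^{-1}\trianglelefteq G$ does the job, since $L\subseteq K$ and $H/L$ embeds into the direct product $\prod_{gH\in G/H}H/(gKg^{-1})$ of conjugate copies of $H/K$ (the subgroup $gKg^{-1}$ depends only on the coset $gH$ because $K\trianglelefteq H$), each of derived length bounded by that of $H/K$, so $H/L$ is solvable and hence $G/L$ is solvable-by-solvable; alternatively one simply invokes the fact, already quoted above, that solvable groups form a root class \cite{Gruenberg57}. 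Since every solvable group is both amenable and elementary amenable we have $\mathcal{R}\subseteq\C$ in both instances, and both classes are closed under finite direct products; this is hypothesis (1).

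For hypothesis (2): an $\mathcal{R}$-by-(amenable) group is amenable-by-amenable, hence amenable because amenability is stable under extensions; likewise an $\mathcal{R}$-by-(elementary amenable) group is (elementary amenable)-by-(elementary amenable), which lies in the class of elementary amenable groups by definition. For hypothesis (3): for a finite cardinal $n\geq 1$ the group $\Z/n\Z$ is solvable of cardinality $n$, and for an infinite cardinal $\kappa$ the free abelian group of rank $\kappa$ is solvable of cardinality $\kappa$, so every amenable (resp.\ elementary amenable) group admits a solvable group of the same cardinality. Lemma~\ref{lemma.boh} then yields both assertions of the Fact.

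The only point requiring genuine care is the verification that the class of all solvable groups is a root class, and inside that the observation that the common bound on derived length survives the possibly infinite product $\prod_{gH\in G/H}H/(gKg^{-1})$; everything else is an invocation of standard closure properties of the two classes. I note also that the free-by-amenable half admits a short self-contained proof bypassing Lemma~\ref{lemma.boh} entirely: given $1\to F\to G\to Q\to 1$ with $F$ free, $Q$ amenable, and $1\neq g\in G$, either $g\notin F$ and one projects to $Q$, or $g\in F$ and, using residual finiteness of $F$, one picks a characteristic subgroup $N$ of finite index in $F$ — necessarily normal in $G$ — with $g\notin N$; then $G/N$ is finite-by-amenable, hence amenable, and detects $g$.
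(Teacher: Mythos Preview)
Your proposal is correct and follows exactly the route the paper indicates: the paper does not prove this Fact in-line but simply asserts that the classes of amenable and elementary amenable groups satisfy the hypotheses of Lemma~\ref{lemma.boh} and then cites \cite[Corollary~3.4]{fb}. Your verification with $\mathcal{R}$ the class of solvable groups is precisely what is needed to justify that assertion, and the additional self-contained argument for the free-by-amenable case is a nice bonus not present in the paper.
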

Hence, as a consequence, these classes are closed under taking free products \cite[Corollary 1.2]{fb}.

The class of amenable groups is closed under taking subgroups and finite
direct products. Moreover, free-by-amenable groups are residually amenable by Fact \ref{free-by-A}. Thus we can apply Theorem A to show that the class of residually amenable groups is closed under taking graph products. In the elementary amenable case we can use the same argument. Hence Corollary A.

\section{Graph products of LE-$\C$ groups}\label{last.section}
We recall here the definition of local embeddability.
Let $G$, $C$ be two groups and $K\subseteq G$ be a finite subset. A map $\varphi\colon G\to C$
is called a $K$-\emph{almost}-\emph{homomorphism} if 
\begin{enumerate}
\item $\varphi(k_1k_2)=\varphi(k_1)\varphi(k_2)$ for all $k_1$, $k_2\in K$,
\item $\varphi\restriction_K$ is injective.
\end{enumerate}
A group $G$ is \emph{locally embeddable into} $\C$ (LE-$\C$ for short) if for all finite $K\subseteq G$ there exist a group $C\in\mathcal{C}$ and a $K$-almost-homomorphism $\varphi\colon G\to C$. 

For classes $\C$ closed under finite direct products this definition yields a generalisation of being residually $\C$ \cite[Corollary 7.1.14]{CSC}.

Theorem A has an analogue for graph products of LE-$\C$ groups.
\begin{thB}
Let $\mathcal{C}$ be a class of groups, suppose that $\mathcal{C}$ is closed under taking subgroups, finite direct products and that graph products of residually~$\mathcal{C}$ groups are residually~$\mathcal{C}$.
Then the class of LE-$\mathcal{C}$ groups is closed under graph products.
\end{thB}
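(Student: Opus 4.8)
The plan is to reduce the LE-$\C$ statement to the already-established graph-product statement for residually $\C$ groups (Theorem A, or rather its hypothesis as packaged here), via a standard local-embeddability trick: to witness the LE-$\C$ property for a finite subset $K$ of the graph product $G = \Gamma\mathcal G$, one first replaces each vertex group $G_v$ by a residually $\C$ group that agrees with it on the finite portion of $G_v$ that $K$ involves, then invokes the hypothesis that graph products of residually $\C$ groups are residually $\C$, and finally observes that a residually $\C$ group is itself LE-$\C$ (using closure of $\C$ under finite direct products, cited just above the statement).

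Here are the steps in order. First I would fix a finite subset $K \subseteq G$. Using the Normal Form Theorem (Theorem \ref{nft}), each $g \in K$ has a reduced expression, and only finitely many vertices appear in $\bigcup_{g \in K}\supp(g)$; applying the retraction $\rho_S$ onto the full subgroup on $S := \bigcup_{g\in K}\supp(g)$, and noting $\rho_S$ is injective on $K$ provided $K$ consists of elements supported in $S$, I reduce to the case where $\Gamma$ is finite (one must be slightly careful: $\rho_S$ restricted to $K$ need not be injective unless every element of $K$ is already supported in $S$, which it is by construction of $S$). Second, for each vertex $v$, let $K_v \subseteq G_v$ be the finite set of all syllables from $G_v$ appearing in the chosen reduced words for elements of $K$, together with $e$; enlarge $K_v$ to be closed under the relevant partial products. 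Since $G_v$ is LE-$\C$, hence LE-(residually $\C$), there is a residually $\C$ group $H_v$ and a $K_v$-almost-homomorphism $\varphi_v\colon G_v \to H_v$. Third, form the graph product $H := \Gamma\mathcal H$ with $\mathcal H = \{H_v \mid v \in V\}$; by the functoriality of graph products the maps $\varphi_v$ assemble to a map $\Phi\colon G \to H$ which is a homomorphism on each vertex group and therefore, on the finite set $K$ of elements with the prescribed reduced expressions, behaves as a genuine homomorphism and is injective on $K$ — this is exactly where the normal form is used to check that $\Phi$ sends a reduced word to a reduced word (with syllables $\varphi_{v}(g_i) \neq e$ and non-joinable by choice of $\varphi_v$ injective on $K_v$), so distinct elements of $K$ have distinct, non-trivial images. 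Fourth, $H$ is a graph product of residually $\C$ groups, hence residually $\C$ by hypothesis; and a residually $\C$ group with $\C$ closed under finite direct products is LE-$\C$. Composing, one gets for the finite set $\Phi(K) \subseteq H$ a $\Phi(K)$-almost-homomorphism $\psi\colon H \to C$ with $C \in \C$, and then $\psi \circ \Phi\colon G \to C$ is a $K$-almost-homomorphism. Hence $G$ is LE-$\C$.

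The main obstacle I expect is the third step: verifying that $\Phi$ really is a $K$-almost-homomorphism, i.e. that it is injective on $K$ and multiplicative on pairs from $K$. Multiplicativity on pairs requires that $K_v$ be chosen large enough — closed under the finitely many products of syllables that arise when one multiplies two elements of $K$ and reduces — so that each $\varphi_v$ is a genuine homomorphism on the relevant sub-configuration; this is a bookkeeping argument but needs care about which products of syllables can appear after applying transformations (T1)--(T3). Injectivity on $K$ is where Theorem \ref{nft} does the real work: two distinct elements of $K$ give reduced words that differ by more than syllable shuffling, and since each $\varphi_v$ is injective on $K_v$ and sends $e$ to $e$, the images are again reduced words (no syllable becomes trivial, no two become joinable), differing by more than shuffling, hence distinct in $H$. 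I would isolate this verification — essentially that a graph product of almost-homomorphisms on sufficiently saturated finite sets is an almost-homomorphism — as the one genuinely non-formal point; the rest is assembly of results already in the excerpt.
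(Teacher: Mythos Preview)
Your proposal is correct and follows essentially the same strategy as the paper: reduce to a finite graph via supports, replace each vertex group by a group in $\C$ (the paper takes $F_v \in \C$ directly rather than residually~$\C$, but this is immaterial) via a $K_v$-almost-homomorphism, assemble these into a map to the new graph product, verify via the Normal Form Theorem that this assembled map is a $K$-almost-homomorphism, and then use that the target is residually~$\C$ to finish. The one bookkeeping difference worth noting, since you flagged this step as the main obstacle, is that rather than enlarging each $K_v$ to be closed under the products of syllables that arise during reduction, the paper instead enlarges $K$ once at the outset to $K' = K^{-1}K$ and lets $K_v$ be the syllables of reduced words for elements of $K'$; the verification of multiplicativity (tracking cancellation of a maximal common factor $c$ and then joining of at most one pair of remaining syllables per vertex) and of injectivity (via $\varphi(k'k^{-1}) \neq e$) then proceeds exactly along the lines you anticipate.
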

\begin{proof}
Let $\Gamma=(V,E)$ be a graph, $\mathcal{G} = \{G_v \mid v\in V\}$ be a family of LE-$\C$ groups and let $G:= \Gamma\mathcal{G}$ be the graph product of $\mathcal{G}$ with respect to $\Gamma$. Let $K \subseteq G$ be a finite subset of $G$. The set $\cup_{k \in K}\supp(k)$ is a finite subset of $V$, thus without loss of generality we can suppose that $V$ itself is finite. Set $$K' = K^{-1} K = \{k^{-1} k' \mid k, k' \in K \cup \{e_G\}\}$$ and suppose that $K' = \{g_1, \dots, g_r\}$. Let $W_1, \dots, W_r$ be reduced words in $G$ representing the elements $g_1,\dots, g_r$.

For every $v\in V$ consider the finite subset 
$$K_v:=\{e_{G_v}\}\cup\{g\in G_v\mid g \text{ is a syllable of some }W_i\}\subseteq G_v.$$
By assumption, the vertex group $G_v$ is LE-$\mathcal{C}$ for every $v \in V$. Hence, there exist a family of groups $\mathcal{F} = \{F_v \in\C\mid v \in V\}$ and a family of $K_v$-almost-homomorphisms $\{\varphi_v\colon G_v\to F_v \mid v \in V \}$.

As $e_{G_v}\in K_v$ it follows $\varphi_v(e_{G_v})=e_{F_v}$. This implies that 
\begin{equation*}\label{welldefined}
	\varphi_v(g)\neq e_{F_v}\quad \forall g\in K_v\setminus\{e_{G_v}\}.
\end{equation*}
Let $F := \Gamma \mathcal{F}$ be the graph product of $\mathcal{F}$ with respect to $\Gamma$.

Let $\mathcal{W}_G$ and $\mathcal{W}_F$ denote the set of all the words in $G$ and $F$ respectively. We define the function $\tilde{\varphi} \colon \mathcal{W}_G \to \mathcal{W}_F$ in the following manner: for $W \equiv (g_1, \dots, g_n)$, where $g_i \in G_{v_i}$ for some $v_i \in V$ for $i = 1, \dots, n$, we set
\begin{displaymath}
	\tilde{\varphi}(W) \equiv (\varphi_{v_1}(g_i), \dots, \varphi_{v_n}(g_n)).
\end{displaymath}
By definition, if $W$ is the empty word in $G$ then $\tilde{\varphi}(W)$ is the empty word in $F$. Let us note that the map $\tilde{\varphi}$ is compatible with concatenation: for all $U,V \in \mathcal{W}_G$ we have $\tilde{\varphi}(UV) = \tilde{\varphi}(U)\tilde{\varphi}(V)$.

Let $g \in G$ be an arbitrary element and let $W \equiv (g_1, \dots g_n)$, $W' \equiv (g_1', \dots, g_m')$ be two reduced words representing $g$ in $G$. By Theorem \ref{nft} we see that $m = n$ and that the word $W$ can be transformed to $W'$ by finite sequence of syllable shufflings. Since the groups $G,F$ are graph products with respect to the same graph, it can be easily seen that the word $\tilde{\varphi}(W)$ can be transformed to $\tilde{\varphi}(W')$ (using the same sequence of syllable shufflings). Hence the words $\tilde{\varphi}(W)$ and $\tilde{\varphi}(W')$ represent the same element in $F$.

We see that the map $\tilde{\varphi}$ induces a well defined map $\varphi \colon G \to F$ given by
\begin{displaymath}
	\varphi(g) = \varphi_{v_1}(g_1) \dots \varphi_{v_n}(g_n).
\end{displaymath}
Clearly, $\varphi \restriction_{G_v} = \varphi_v$ for every $v \in V$ and thus it makes sense to omit the subscripts and write
		\begin{displaymath}
			\varphi(g) = \varphi(g_1)\dots \varphi(g_n).
		\end{displaymath}

We claim that $\varphi$ is a $K$-almost-homomorphism, that is, $\varphi\restriction_K$ is an injective map and $\varphi(kk')=\varphi(k)\varphi(k')$ for all $k,k'\in K$.

First of all, let us show that if the reduced word $W_k\equiv (f_1, \dots, f_n)$ represents $k\in K'$ in the group $G$, then the word $\tilde{\varphi}(W_k)\equiv (\varphi(f_1), \dots, \varphi(f_n))$, which represents $\varphi(k)$ in $F$, is a reduced word in $F$.

As the maps $\varphi_v$ are $K_v$-almost-homomorphisms for every $v \in V$, it follows that $\varphi(f_i) \neq e$ in $F$ for $i = 1, \dots, n$, so no syllable of $\tilde{\varphi}(W_k)$ is trivial. Suppose that $\tilde{\varphi}(W_k)$ is not reduced in $F$. This means that there exist $i<j \in \{1, \dots, n\}$ such that the syllables $\varphi(f_i)$ and $\varphi(f_j)$ can be joined together. However, this implies that the syllables $f_i$ and $f_j$ can be joined in the word $W_k$, which contradicts the fact that $W_k$ is reduced. Hence $\tilde{\varphi}(W_k)$ is reduced.

Now, let us prove that $\varphi(kk')=\varphi(k)\varphi(k')$ for all $k,k'\in K'$. 

Let $k,k' \in K$ be arbitrary and let $W,W'$ be reduced words representing $k$ and $k'$ respectively. We want to show that the word $\tilde{\varphi}(WW') \equiv \tilde{\varphi}(W)\tilde{\varphi}(W')$ represents the element $\varphi(kk')$.

Suppose that the product $kk'$ is reduced, i.e. the concatenation $WW'$, which is a word representing $kk'$ in $G$, is reduced. Using a similar argument as above we see that the word $\tilde{\varphi}(WW') \equiv \tilde{\varphi}(W)\tilde{\varphi}(W')$ is reduced. The word $\tilde{\varphi}(W)\tilde{\varphi}(W)$ represents $\varphi(k)\varphi(k')$ in $F$ by definition, but at the same time we see that the word $\tilde{\varphi}(WW')$ represents $\varphi(kk')$ in $F$, and thus $\varphi(kk')=\varphi(k)\varphi(k')$.

Now, suppose that the product $kk'$ is not reduced. Let $c,f,g \in G$ be such that $k$ factorises as a reduced product $k = fc$, $k'$ factorises as a reduced product $k' =c^{-1}g$ and $|c|$ is maximal. Clearly, $kk' = f'g'$. Without loss of generality we may assume that $W \equiv(f_1, \dots, f_n, c_1, \dots c_l)$ and $W' \equiv (c_l^{-1}, \dots, c_1^{-1},g_1, \dots, g_m)$, where $c = c_1 \dots c_l$, $f = f_1 \dots f_n$ and $g = g_1 \dots g_m$.

Consider the word $X \in \mathcal{W}_F$, where
\begin{displaymath}
	X \equiv (\varphi(f_1), \dots, \varphi(f_n), \varphi(c_1), \dots,\varphi(c_l), \varphi(c_l^{-1}), \dots, \varphi(c_1^{-1}), \varphi(g_1), \dots, \varphi(g_m)).
\end{displaymath}
Note that $X = \tilde{\varphi}(WW') = \tilde{\varphi}(W) \tilde{\varphi}(W')$.
The syllable $\varphi(c_l)$ can be joined with syllable $\varphi(c_l^{-1})$. Obviously, $c_l \in G_u$ for some $u \in V$. As $\varphi \restriction_{G_u}$ is a $K_u$-almost-homomorphism and $c_l, c_l^{-1} \in K_u$ we see that $\varphi(c_l)\varphi(c_l^{-1}) = \varphi(c_l c_l^{-1}) = \varphi(e_{G_u})$. As stated before, $\varphi(e_{G_v})=e_{F_v}$ for every $v \in V$ and thus we can remove the trivial syllable. Note that this transformation is compatible with the function $\tilde{\varphi}$:
\begin{displaymath}
	\varphi(k)\varphi(k') = \varphi(f_1 \dots f_n c_1 \dots c_{l-1})\varphi(c_{l-1}^{-1}\dots c_1^{-1}g_1 \dots g_m).
\end{displaymath}
Repeating these two steps $l-1$ more times, the word $X$ can be rewritten to
\begin{displaymath}
	X' \equiv (\varphi(f_1), \dots, \varphi(f_n), \varphi(g_1), \dots, \varphi(g_m))
\end{displaymath}
and thus we see that $\varphi(k)\varphi(k') = \varphi(f)\varphi(g)$.

Note that the word $X'$ is reduced in $F$ if and only if the word $(f_1, \dots, f_n, g_1, \dots, g_m)$ is reduced in $G$. Suppose that the word $X'$ is reduced. Then clearly $$\varphi(k)\varphi(k') = \varphi(f)\varphi(g) = \varphi(fg) = \varphi(kk')$$ and we are done.

Suppose that the word $X'$ is not reduced. As all the syllables of $X'$ are nontrivial we see that two syllables of the word $X'$ can be joined together. The word $(\varphi(f_1), \dots, \varphi(f_n))$ is a subword of $\tilde{\varphi}(W)$, which is a reduced word, and thus it is is reduced, hence no two syllables of $(\varphi(f_1), \dots, \varphi(f_n))$ can be joined together. The same argument applies to $(\varphi(g_1), \dots, \varphi(g_m))$. Hence, we see that there exist $1 \leq i \leq n$ and $1 \leq j \leq m$ such that the syllables $\varphi(f_i)$ and $\varphi(g_j)$ can be joined together in $X'$. Again, $f_i, g_j \in G_u$ for some $u \in V$ and thus $\varphi(f_i) \varphi(g_j) = \varphi(f_ig_j)$ as $f_i, g_j \in K_u$. By the assumptions (as $\varphi\restriction_{K_u}$ is injective), $\varphi(f_i g_j) = e_{F_u}$ if and only if $f_i g_j = e_{G_u}$. However, $f_i = g_j^{-1}$ would be a contradiction with the maximality of $\lvert c\rvert$, hence $\varphi(f_i) \varphi(g_j) \neq e_{F_u}$. As $\varphi \restriction_{G_u}$ is a $K_u$-almost-homomorphism we see that joining the syllable $\varphi(f_i)$ with the syllable $\varphi(g_j)$ is compatible with the map $\tilde{\varphi}$.

Suppose that the syllable $\varphi(f_i g_j)$ can be joined with some $\varphi(f_k)$. By definition, this means that $\varphi(f_k)$ and $\varphi(f_i)$ could have been joined in $\tilde{\varphi}(W)$. This contradicts the fact that $\tilde{\varphi}(W)$ is reduced. By an analogous argument, the syllable $\varphi(f_i g_j)$ cannot be joined with any syllable $\varphi(g_p)$.

By iterating the previous step at most $\min\{n,m\}$ times, we obtain a sequence of transformations compatible with the map $\tilde{\varphi}$. All together, we have shown that the word $\tilde{\varphi}(W)\tilde{\varphi}(W')$ can be rewritten to a reduced word $X''$, that represents the element $\varphi(k)\varphi(k')$ in $F$, and each rewriting step is compatible with the map $\tilde{\varphi}$: if we applied the analogous transformations to the word $WW'$ we would obtain a reduced word $U$, that represents the element $kk'$ in $G$, such that $\tilde{\varphi}(U) \equiv X''$. It follows that $\varphi(kk') = \varphi(k)\varphi(k')$. 

To finish, we need to prove that $\varphi\restriction_K$ is an injective map. Let $k, k' \in K \subseteq K'$ be arbitrary such that $k \neq k'$, or equivalently $k' k^{-1} \neq e_G$. We have already shown that $\varphi(k^{-1}k') = \varphi(k^{-1})\varphi(k')$. Consider a reduced word $W_{k'k^{-1}}$ representing the element $k'k^{-1} \in K'$. Note that by the construction of the function $\varphi$ it follows that $\varphi(k) = \varphi(k)^{-1}$ for all $k \in K$. By the previous argumentation, the word $\tilde{\varphi}(W_{k'k^{-1}})$ is reduced in $F$ and thus by Theorem \ref{nft} we see that $\varphi(k'k^{-1})=\varphi(k')\varphi(k)^{-1} \neq e_F$. It follows that $\varphi(k) \neq \varphi(k')$.

Thus, we proved that $\varphi$ is a $K$-almost-homomorphism.

The graph product $F=\Gamma\mathcal{F}$ is residually $\mathcal{C}$ by assumption. 
Hence, there exists a surjective homomorphism $\psi\colon F\twoheadrightarrow D\in\mathcal{C}$ which is injective on the finite subset $\varphi(K)\subseteq F$. 
Thus, the composition $\psi\circ\varphi\colon G\to D$ is a $K$-almost-homomorphism, and $G$ is LE-$\mathcal{C}$.
\end{proof}

As an immediate corollary we get the following.
\begin{corollary}\label{leR}
Let $\mathcal{R}$ be a root class. Then the class of LE-$\mathcal{R}$ groups is closed under
graph products.
\end{corollary}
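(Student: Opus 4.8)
The plan is to read this off Theorem B, so the only real work is checking that a root class $\mathcal{R}$ satisfies the three standing hypotheses there: closure under subgroups, closure under finite direct products, and the statement that graph products of residually~$\mathcal{R}$ groups are residually~$\mathcal{R}$. Closure under subgroups is part of the definition of a root class, so that one is free.

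For closure under finite direct products I would argue as follows. Given $G_1, G_2 \in \mathcal{R}$, apply the root-class property to the subnormal series $\{e\} \trianglelefteq G_1 \times \{e\} \trianglelefteq G_1 \times G_2$: here $(G_1 \times \{e\})/\{e\} \cong G_1 \in \mathcal{R}$ and $(G_1 \times G_2)/(G_1 \times \{e\}) \cong G_2 \in \mathcal{R}$, so there is $L \trianglelefteq G_1 \times G_2$ with $L \subseteq \{e\}$ and $(G_1 \times G_2)/L \in \mathcal{R}$; this forces $L = \{e\}$ and hence $G_1 \times G_2 \in \mathcal{R}$. This is exactly the observation already invoked in the proof of Corollary \ref{resR}. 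The third hypothesis, that graph products of residually~$\mathcal{R}$ groups are residually~$\mathcal{R}$, is nothing but Corollary \ref{resR} itself (which in turn comes from Theorem A). With all three conditions verified, Theorem B applies directly and gives that the class of LE-$\mathcal{R}$ groups is closed under graph products.

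I expect no genuine obstacle: the statement is a bookkeeping consequence of Theorem B together with Corollary \ref{resR}, and the sole point needing a short argument rather than a citation is the closure of $\mathcal{R}$ under finite direct products, which is itself already recorded in the proof of Corollary \ref{resR}.
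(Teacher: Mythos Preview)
Your proposal is correct and matches the paper's approach: the paper states Corollary~\ref{leR} as an immediate consequence of Theorem~B (with the residual hypothesis supplied by Corollary~\ref{resR}), and you have simply spelled out the verification of the three standing hypotheses, including a short argument for closure under finite direct products that the paper merely asserts in the proof of Corollary~\ref{resR}.
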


Note that the first four cases of Corollary B follow from Corollary \ref{leR}. In the remaining cases the assumptions of Theorem B are met by Corollary A, hence Corollary B.
\section*{Acknowledgments}
\noindent
The main body of work for the presented results was done during November 2014, when Michal Ferov was visiting the University of Vienna. This visit was supported by the European Research Council (ERC) grant of Prof. Goulnara Arzhantseva, grant agreement no.~259527.
Federico Berlai is supported by the European Research Council (ERC) grant of Prof. Goulnara Arzhantseva, grant agreement no.~259527.

\end{document}